\newcommand{\cmark}{\ding{51}}
\newcommand{\xmark}{\ding{55}}
\setlist[enumerate,1]{label=\alph*)}
\setlist[enumerate,2]{label=(\roman*),ref=\theenumi(\roman*)}
\setlist[enumerate,3]{label=\Roman*.}
\definecolor{dgreen}{HTML}{41A317}
\DeclareMathOperator{\epi}{epi}
\DeclareMathOperator{\pos}{pos}
\DeclareMathOperator{\cl}{cl}
\DeclareMathOperator{\aff}{aff}
\DeclareMathOperator{\lin}{lin}
\DeclareMathOperator{\relint}{relint}
\DeclareMathOperator{\relbd}{relbd}
\DeclareMathOperator{\conv}{conv}
\newcommand{\eg}{e.\,g.}
\newcommand{\ie}{i.\,e.}
\newcommand{\posreals}{\ensuremath{\reals_{\geq 0}}}
\newcommand{\reals}{\ensuremath{\mathbb{R}}}
\newcommand{\halfspace}[3][\le]{\ensuremath{H^{#1}_{(#2,#3)}}}
\newcommand{\hplane}[2]{\ensuremath{H_{(#1,#2)}}}
\newcommand{\CH}{\ensuremath{\mathcal{H}}}
\newcommand{\subjectto}{\text{s.t.}\ }
\newcommand*\colvec[1]{\begin{pmatrix}#1\end{pmatrix}}
\renewcommand{\phi}{\varphi}
\renewcommand{\epsilon}{\varepsilon}
   \providecommand\given{}
   \newcommand\SetSymbol[1][]{%
      \nonscript\:#1\vert
      \allowbreak
      \nonscript\:
      \mathopen{}}
   \DeclarePairedDelimiterX\Set[1]\{\}{%
      \renewcommand\given{\SetSymbol[\delimsize]}
      #1
   }
\newcommand{\MIS}{\textsc{MIS}\xspace}
\newcommand{\facet}{\textsc{Facet}\xspace}
\newcommand{\pareto}{\textsc{Pareto}\xspace}
\def\myspecialnode#1{
	\tikz@scan@one@point\pgfutil@firstofone($#1$)
	\pgf@xa=\pgf@x%
	\pgf@ya=\pgf@y%
	\pgfmathparse{1/(\pgf@xa*\pgf@xa/(\pgf@xx*\pgf@xx)+\pgf@ya*\pgf@ya/(\pgf@yy*\pgf@yy))}
	\let\myval=\pgfmathresult
}
\newcommand{\subobj}{z}
	\tikzset{
		highlight/.style={
			fill=accentuating green,
			fill opacity=.2},
		gridnode/.style={
			circle,
			minimum size=18pt,
			inner sep=0pt,
			text=black,
			draw=black},
		extpoint/.style={
			fill,
			circle,
			minimum width=4pt,
			inner sep=0},
	}
\crefname{enumi}{}{}
\crefname{equation}{}{}
\theoremstyle{plain}
\newtheorem{theorem}{Theorem}%
\crefname{theorem}{Theorem}{Theorems}
\crefname{proposition}{Proposition}{Propositions}
\newtheorem{lemma}[theorem]{Lemma}%
\crefname{lemma}{Lemma}{Lemmas}
\newtheorem{corollary}[theorem]{Corollary}
\crefname{corollary}{Corollary}{Corollaries}
\theoremstyle{definition}
\newtheorem{remark}[theorem]{Remark}%
\crefname{remark}{Remark}{Remarks}
\newtheorem{definition}[theorem]{Definition}%
\crefname{definition}{Definition}{Definitions}
\crefname{example}{Example}{Examples}
\crefname{table}{Table}{Tables}
\crefname{section}{Section}{Sections}
\declaretheoremstyle[
qed={},
headpunct={},
]{examplec}
\declaretheorem[numberlike=theorem,style=examplec, name=Example]{examplec}%
\crefname{examplec}{Example}{Example}
\renewcommand\thmcontinues[1]{%
	\ifcsname hyperref\endcsname
		\hyperref[#1]{continued}%
	\else
		continued%
	\fi}
\author{René Brandenberg}
\address{Technical University of Munich, Germany}
\email{rene.brandenberg@tum.de}
\author{Paul Stursberg}
\thanks{Paul Stursberg acknowledges funding from Deutsche Forschungsgemeinschaft (DFG) through TUM International Graduate School of Science and Engineering (IGSSE), GSC 81, and by the German Federal Ministry for Economic Affairs and Energy (FKZ 03ET4029) on the basis of a decision by the German Bundestag}
\address{Technical University of Munich, Germany}
\email{paul.stursberg@tum.de}
\title{Cut Selection for Benders Decomposition}
\subjclass[2010]{Primary 90C11; Secondary 90C05}
\keywords{Benders decomposition, decomposition methods, cutting planes, reverse polar set, alternative polyhedron, pareto optimal cuts, facet defining cuts}
\pgfplotsset{compat=1.14}
\begin{document}

\begin{abstract}
	In this paper, we present a new perspective on cut generation in the context of Benders decomposition. The approach, which is based on the relation between the alternative polyhedron and the reverse polar set, helps us to improve established cut selection procedures for Benders cuts, like the one suggested by \textcite{Fischetti:2010}. Our modified version of that criterion produces cuts which are always supporting and, unless in rare special cases, facet-defining.


	We discuss our approach in relation to the state of the art in cut generation for Benders decomposition. In particular, we refer to Pareto-optimality and facet-defining cuts and observe that each of these criteria can be matched to a particular subset of parameterizations for our cut generation framework. As a consequence, our framework covers the method to generate facet-defining cuts proposed by \textcite{Conforti:2018} as a special case.
\end{abstract}

\maketitle

\section{Introduction}
\label{sec:benders_gen}

Consider a generic optimization problem with two subsets of variables $x$ and $y$ where $x$ is restricted to lie in some set $S \subseteq \reals^n$ and $x$ and $y$ are jointly constrained by a set of $m$ linear inequalities. Such a problem can be written in the following form:
\begin{equation}
	\begin{aligned}
	\min\ &c^\top x + d^\top y\\
	\subjectto 	&Hx+Ay \leq b\\
				&x \in S \subseteq \reals^n\\
				&y \in \reals^k
	\end{aligned}\label{eq:gen_problem}
\end{equation}

The \emph{interaction matrix} $H \in \reals^{m \times n}$
captures the influence of the $x$-variables on the $y$-subproblem: For fixed $x^*$, \eqref{eq:gen_problem} reduces to an ordinary linear program with constraints $Ay \leq b-Hx^*$.

We are interested in cases where the size of the complete problem \cref{eq:gen_problem} leads to infeasibly high computation times (or memory demands), but both the problem over $S$ and the problem resulting from fixing $x$ can separately be solved much more efficiently due to their special structures.
To deal with such problems, \textcite{Benders:1962} introduced a method that works by iterating between these two \enquote{easier} problems:

For a problem of the form \cref{eq:gen_problem}, let the function $\subobj: \reals^n \rightarrow \reals \cup \Set{\pm\infty}$ represent the value of the optimal $y$-part of the objective function for a fixed vector $x$:
\begin{equation}
	\subobj(x) := \min_{y \in \reals^k} \Set*{d^\top y \given Ay \leq b - Hx}\label{eq:gen_eta}
\end{equation}

The corresponding epigraph of $\subobj$ is
\begin{equation}
	\epi(\subobj) = \Set*{(x,\eta) \in \reals^n \times \reals \given \exists y \in \reals^k:
				\begin{gathered}
					Ay \leq b - Hx\\
					d^\top y \leq \eta
				\end{gathered}}.\label{eq:gen_epi}
\end{equation}

Writing $\epi_S(\subobj) := \epi(\subobj) \cap (S \times \reals)$, this provides us with an alternative representation of the optimization problem \cref{eq:gen_problem}:
\begin{equation*}
	\min\Set{c^\top x + \eta \given (x,\eta) \in \epi_S(\subobj)}
\end{equation*}

This representation suggests the following iterative algorithm:
Start by finding a solution $(x^*,\eta^*) \in S \times \reals$ that minimizes $c^\top x + \eta$ without any additional constraints (adding a generous lower bound for $\eta$ to make the problem bounded). If $(x^*,\eta^*) \in \epi(\subobj)$, then $(x^*,\eta^*) \in \epi_S(\subobj)$ (since $x^* \in S$) and the solution is optimal. Otherwise, we add constraints violated by $(x^*,\eta^*)$ but satisfied by all $(x',\eta') \in \epi(\subobj)$ and iterate.
This is of course just an ordinary cutting plane algorithm and the crucial question is how to select a \emph{separating inequality} in each iteration.

The original Benders algorithm uses \emph{feasibility cuts} (cuts with coefficient $0$ for the variable $\eta$) and \emph{optimality cuts} (cuts with non-zero coefficient for the variable $\eta$), depending on whether or not the subproblem that results from fixing the $x$-variables is feasible (see, \eg, \cite{Vanderbeck:2010}). \Textcite{Fischetti:2010}, on the other hand, present a unified perspective that covers both cases: They begin by observing that the subproblem can be seen as a pure feasibility problem, represented by the set
\begin{equation}
	\Set*{y \in \reals^k  \given  \begin{gathered}
									Ay \leq b-Hx^*\\
									d^\top y \leq \eta^*
								\end{gathered}}.\label{eq:gen_subprob_feas}
\end{equation}

This polyhedron will be empty if and only if $(x^*,\eta^*) \notin \epi(\subobj)$ 
and any Farkas certificate for emptiness of \cref{eq:gen_subprob_feas} can be used to derive an additional valid inequality. The set of such certificates (up to positive scaling)
\begin{equation}
	P(x^*,\eta^*) := \Set*{\gamma,\gamma_0 \geq 0 \given \begin{gathered}
																	\gamma^\top A + \gamma_0 d^\top = 0\\
																	\gamma^\top(b-Hx^*)+\gamma_0\eta^*=-1
															\end{gathered}} \label{eq:P_alt}
\end{equation}
is called \emph{alternative polyhedron}.
Thus $P(x^*,\eta^*) = \emptyset$ if and only if $(x^*,\eta^*) \in \epi(\subobj)$ and every point $(\gamma, \gamma_0) \in P(x^*,\eta^*)$ induces an inequality $\gamma^\top(b-Hx)+\gamma_0\eta \geq 0$ that is valid for $\epi(\subobj)$ but violated by $(x^*,\eta^*)$.

This characterization is very useful and has been demonstrated empirically to work well in \cite{Fischetti:2010}. However, it exposes some fundamental issues, which are demonstrated by the following example.

\begin{examplec}\label{ex:alt_poly_rev_polar}
	Consider the following optimization problem:
	\begin{equation}
		\min\ \begin{aligned}[t] x + y\quad&\\
			2x+y &\geq 5\\
			\frac{1}{2}x+y &\geq 3\\
			4x+4y &\geq 14
	\end{aligned}\label{eq:ex_alt_poly_rev_polar}
	\end{equation}

	\begin{figure}
		\centering
		\begin{tikzpicture}
		\begin{axis}[
		width=.6\textwidth,
		xlabel=x,
		ylabel=y,
		axis x line=bottom,
		axis y line=left,
		xmin=0,
		ymin=0
		]

		\addplot[] {5-2*x};
		\addplot[] {3-1/2*x};
		\addplot[] {7/2-x};

		\fill[highlight] (axis cs:0,5) -- (axis cs:4/3,7/3) -- (axis cs:5,0.5) -- (axis cs:5,6) -- cycle;

		\end{axis}
		\end{tikzpicture}
		\caption{Constraints and feasible region for the optimization problem from \cref{ex:alt_poly_rev_polar}.}
		\label{fig:ex_alt_poly_rev_polar}
	\end{figure}
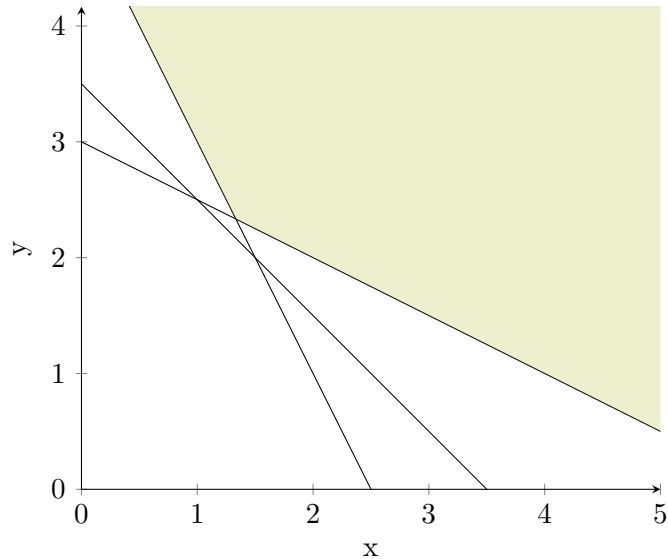

	Note that the constraint $4x+4y \geq 14$ is redundant and does not support the feasible region. Suppose that we want to decompose the problem into its $x$-part and its $y$-part.
%
%
%
%
%
%
%

	Writing the components of $(\gamma,\gamma_0)$ in order $(\gamma_1,\gamma_2,\gamma_3, \gamma_0)$,
  the three extremal points of $P(0,0)$ are
	\begin{align*}
		P_1&=\left(\frac{1}{5},0,0, \frac{1}{5}\right)\\
		P_2&=\left(0,\frac{1}{3},0, \frac{1}{3}\right)\\
		P_3&=\left(0,0,\frac{1}{14}, \frac{2}{7}\right).
	\end{align*}

	As \textcite{Gleeson:1990} showed, each of these points corresponds to a minimal infeasible subsystem fo \cref{eq:ex_alt_poly_rev_polar} with the objective function written in inequality form $x+y \le 0$. Consequently, each extremal point yields one of the original inequalities as a cut. This notably includes the redundant inequality $4x+4y \geq 14$, which does not support the feasible region but is derived from the extremal point $P_3$ in the alternative polyhedron.
\end{examplec}

A cut generated from a point in the alternative polyhedron may thus be very weak, not even supporting the set $\epi(\subobj)$. This is true even if we use a vertex of the alternative polyhedron and even if that vertex minimizes a given linear objective such as the vector $\mathbbm{1}$ as suggested in \cite{Fischetti:2010}.

In the following, we present an improved approach for cut generation in the context of Benders decomposition. Our method can be parametrized by the selection of an objective vector in primal space and produces facet cuts without any additional computational effort for all but a sub-dimensional set of parametrizations. In addition, our method is more robust with respect to the formulation of the problem than the original approach from \cite{Fischetti:2010}. In particular it always generates supporting Benders cuts, avoiding the problem pointed out in the context of \cref{ex:alt_poly_rev_polar} above.

Our method is based on the relation between the alternative polyhedron as introduced above, which is commonly used in the context of Benders cut generation, and the reverse polar set, originally introduced by \textcite{Balas:1964} in the context of transportation problems.

We show that the alternative polyhedron can be viewed as an extended formulation of the reverse polar set, providing us with a parametrizable method to generate cuts with different well-known desirable properties, most notably facet-defining cuts. As a special case, we obtain an (arguably simpler) alternative proof for the method to generate facet-defining cuts proposed by \textcite{Conforti:2018}, if applied to Benders Decomposition. Our works links their approach more directly to previous work on cut selection, both within Benders decomposition (\eg \cite{Fischetti:2010}) and more generally for separation from convex sets (\eg \cite{Cornuejols:2006}).

Before we proceed by investigating different representations of the set of possible Benders cuts, it is useful
to record a general characterization of the set of
normal vectors for cuts separating a point from $\epi(\subobj)$ as defined in \cref{eq:gen_epi}. In the following, we say that a halfspace $\halfspace{\pi}{\alpha} := \Set{x \in \reals^n \given \pi^\top x \leq \alpha}$ is \emph{$x$-separating} for a convex set $C \subset \reals^n$ and a point $x \in \reals^n \setminus C$ if
$x \notin \halfspace{\pi}{\alpha} \supset C$.

\begin{theorem}\label{thm:gen_feascuts}
	Let $\subobj$ be defined as in \cref{eq:gen_eta} such that $\epi(\subobj) \neq \emptyset$ and let $(x^*,\eta^*),(\pi, \pi_0) \in \reals^n \times \reals$.
	Then $(\pi,\pi_0)$ is the normal vector of a $(x^*,\eta^*)$-separating halfspace for $\epi(\subobj)$
  if and only if there exists a vector $\gamma \in \posreals^m$ satisfying
	\begin{align}
		(\pi^\top,\pi_0) \colvec{x^*\\\eta^*} - \gamma^\top b &> 0\label{eq:gen_feascuts_1}\\
		\gamma^\top A - \pi_0 d^\top &= 0\label{eq:gen_feascuts_2}\\
		\gamma^\top H &= \pi^\top\\
		 \pi_0 &\leq 0.\label{eq:gen_feascuts_last}
	\end{align}
\end{theorem}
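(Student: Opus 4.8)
The plan is to determine, by linear-programming duality, exactly when the linear functional $(x,\eta)\mapsto\pi^\top x+\pi_0\eta$ supports a separation. A halfspace $\Set{(x,\eta)\given\pi^\top x+\pi_0\eta\le\alpha}$ contains $\epi(\subobj)$ but excludes $(x^*,\eta^*)$ if and only if $\pi^\top x+\pi_0\eta\le\alpha$ holds throughout $\epi(\subobj)$ while $\pi^\top x^*+\pi_0\eta^*>\alpha$. Such an offset $\alpha$ exists, for the given $(\pi,\pi_0)$, precisely when
\[
	s:=\sup\Set*{\pi^\top x+\pi_0\eta\given(x,\eta)\in\epi(\subobj)}<\pi^\top x^*+\pi_0\eta^*,
\]
and since $\epi(\subobj)\neq\emptyset$ this in particular forces $s$ to be a real number. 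So it all comes down to analysing $s$.

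First I would write $s$ as the optimal value of a linear program: by \cref{eq:gen_epi}, $\epi(\subobj)$ is the image of the polyhedron $\Set{(x,\eta,y)\given Hx+Ay\le b,\ d^\top y-\eta\le 0}$ under projection onto the $(x,\eta)$-coordinates, so
\[
	s=\max\Set*{\pi^\top x+\pi_0\eta\given Hx+Ay\le b,\ d^\top y-\eta\le 0}
\]
over free variables $x\in\reals^n$, $\eta\in\reals$, $y\in\reals^k$. Assigning a multiplier vector $\gamma\in\posreals^m$ to $Hx+Ay\le b$ and a scalar $\gamma_0\ge 0$ to $d^\top y-\eta\le 0$, the LP dual is
\[
	\min\Set*{\gamma^\top b\given \gamma^\top H=\pi^\top,\ -\gamma_0=\pi_0,\ \gamma^\top A+\gamma_0 d^\top=0,\ \gamma\ge 0,\ \gamma_0\ge 0},
\]
the three equations being those attached to the free primal variable blocks $x$, $\eta$ and $y$. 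Substituting $\gamma_0=-\pi_0$ turns $\gamma_0\ge 0$ into $\pi_0\le 0$ and the last equation into $\gamma^\top A-\pi_0 d^\top=0$; so dual feasibility of $\gamma$ (for the given $\pi,\pi_0$) is exactly the conjunction of \cref{eq:gen_feascuts_2}, the equation $\gamma^\top H=\pi^\top$, and \cref{eq:gen_feascuts_last}.

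Now strong LP duality finishes it. The primal is feasible because $\epi(\subobj)\neq\emptyset$, hence $s<\infty$ iff the dual is feasible, in which case $s$ equals the (attained) dual optimum. Thus a $(x^*,\eta^*)$-separating halfspace with normal vector $(\pi,\pi_0)$ exists iff the dual is feasible with optimal value strictly below $\pi^\top x^*+\pi_0\eta^*$; and as that optimum is attained, this is equivalent to there being a single dual-feasible $\gamma$ with $\gamma^\top b<\pi^\top x^*+\pi_0\eta^*$, i.e.\ one satisfying \cref{eq:gen_feascuts_1}. Combined with the description of dual feasibility above, this is precisely the system \crefrange{eq:gen_feascuts_1}{eq:gen_feascuts_last}. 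The degenerate cases are consistent: if $\pi_0>0$ the dual is infeasible and indeed $s=+\infty$ (as $\epi(\subobj)$ is unbounded in the $+\eta$ direction), and if $(x^*,\eta^*)\in\epi(\subobj)$ then $s\ge\pi^\top x^*+\pi_0\eta^*$ — so in both cases neither side of the equivalence holds.

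The step needing genuine care is the case distinction inherent in LP duality: one must use that a feasible primal with an infeasible dual is unbounded — so that "no separating halfspace" matches "no admissible $\gamma$" — and that a feasible dual, being bounded below by weak duality, attains its optimum, which is what licenses passing from a strict inequality on the optimal value to a single witnessing $\gamma$. The sign bookkeeping in dualising, especially that the $\eta$-column forces $\gamma_0=-\pi_0$ and hence $\pi_0\le 0$, is the other place to be meticulous, but it is otherwise routine.
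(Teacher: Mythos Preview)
Your proof is correct and follows essentially the same approach as the paper: both characterize separation via the support function $s=\sup\{\pi^\top x+\pi_0\eta:(x,\eta)\in\epi(\subobj)\}$, write it as an LP over $(x,\eta,y)$, and apply strong LP duality to obtain the dual whose feasibility conditions are exactly \crefrange{eq:gen_feascuts_2}{eq:gen_feascuts_last}. If anything, you are slightly more careful than the paper in handling the case distinctions (dual infeasible $\Leftrightarrow$ $s=+\infty$) and in justifying the passage from a strict inequality on the optimal value to the existence of a single witnessing $\gamma$.
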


\begin{proof}
	Let $h_{\epi(\subobj)}(\pi,\pi_0) := \sup\Set{\pi^\top x + \pi_0 \eta \given (x,\eta) \in \epi(\subobj)}$ be the support function of $\epi(\subobj)$ evaluated at $(\pi,\pi_0)$.
	The vector $(\pi,\pi_0)$ is the normal vector of a $(x^*,\eta^*)$-separating halfspace for $\epi(\subobj)$ if and only if

  \begin{equation}
    0 <  (\pi^\top,\pi_0) \colvec{x^*\\\eta^*} - h_{\epi(\subobj)}(\pi, \pi_0).\label{eq:gen_separation_criterion}
  \end{equation}
	By the definition of $\epi(\subobj)$ (which is closed and polyhedral) and then by strong LP duality, we obtain
	\begin{align}
		h_{\epi(\subobj)}(\pi, \pi_0) &= \max_{\substack{x \in \reals^n,y \in \reals^k\\\eta \in \reals}}\Set*{ (\pi^\top,\pi_0) \colvec{x\\\eta}  \given  \begin{aligned}
				Ay &\leq b-Hx\\
				d^\top y &\leq \eta
			\end{aligned}}\label{eq:gen_feascuts_primal}\\
			&= \min_{\substack{\gamma_0 \in \posreals\\\gamma \in \posreals^m}} \Set*{ \gamma^\top b  \given  \begin{aligned}
				\gamma^\top A + \gamma_0 d^\top &= 0\\
				\gamma^\top H &= \pi^\top\\
				-\gamma_0 &= \pi_0
			\end{aligned} }\label{eq:gen_feascuts_dual}.
	\end{align}
	Note that in order for the equality $-\gamma_0=\pi_0$ to hold and \cref{eq:gen_feascuts_dual} to be feasible (and hence \cref{eq:gen_feascuts_primal} to be bounded), we need that $\pi_0 \leq 0$.
  Thus the optimality of any pair $(\gamma,\gamma_0)$ for \eqref{eq:gen_feascuts_dual} is equivalent to
  to the fulfillment of conditions \crefrange{eq:gen_feascuts_1}{eq:gen_feascuts_last}.
\end{proof}

As one can see from the proof above, any $\gamma$ satisfying \crefrange{eq:gen_feascuts_2}{eq:gen_feascuts_last} is an upper bound for 
$h_{\epi(\subobj)}$
. This means that given a certificate $\gamma$ to prove that $(\pi,\pi_0)$ is a normal vector of an $(x^*,\eta^*)$-separating halfspace $\halfspace{(\pi,\pi_0)}{\alpha}$, we immediately obtain a corresponding right hand side $\alpha := \gamma^\top b$.
Furthermore, the definition of the support function $h_{\epi(\subobj)}$ immediately tells us when this right-hand side is actually optimal and the resulting halfspace supports $\epi(\subobj)$:

\begin{remark}\label{thm:gen_supports_Q}
	Let $(x^*,\eta^*) \in \reals^n \times \reals$ and let $(\pi,\pi_0)$ be the normal vector of an $(x^*,\eta^*)$-separating halfspace for $\epi(\subobj)$. If $\gamma$ minimizes $\gamma^\top b$ among all possible certificates in \cref{thm:gen_feascuts}, then the halfspace $\halfspace{(\pi,\pi_0)}{ \gamma^\top b}$ supports the set $\epi(\subobj)$.
\end{remark}
\section{Benders Cuts from the Reverse Polar Set}

While it would be sufficient for the approach from \cite{Fischetti:2010} to obtain an \emph{arbitrary} $(x^*,\eta^*)$-separating halfspace whenever the set in \cref{eq:gen_subprob_feas} is empty, the alternative polyhedron $P(x^*,\eta^*)$ actually completely characterizes the set of \emph{all} possible normal vectors of such halfspaces:

\begin{corollary}\label{thm:alt_poly_all_cuts}
	The alternative polyhedron \cref{eq:P_alt} completely characterizes all normal vectors of $(x^*,\eta^*)$-separating halfspaces for $\epi(\subobj)$. In particular:
	\begin{enumerate}
		\item Let $(\gamma,\gamma_0) \in P(x^*,\eta^*)$. Then $\gamma^\top Hx -\gamma_0 \eta \leq \gamma^\top b$ is violated by $(x^*,\eta^*)$, but satisfied by all $(x,\eta) \in \epi(\subobj)$.
		\item Let $(\pi,\pi_0)$ be the normal vector of a $(x^*,\eta^*)$-separating halfspace for $\epi(\subobj)$. Then there exist $(\gamma,\gamma_0) \in P(x^*,\eta^*)$ and $\lambda \geq 0$ such that $(\gamma^\top H, -\gamma_0) = \lambda \cdot (\pi,\pi_0)$.
	\end{enumerate}
\end{corollary}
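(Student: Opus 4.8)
The plan is to read off both parts of \cref{thm:alt_poly_all_cuts} directly from \cref{thm:gen_feascuts} via the identification $(\pi^\top,\pi_0) = (\gamma^\top H,-\gamma_0)$. The one thing to keep track of is that the affine normalization $\gamma^\top(b-Hx^*)+\gamma_0\eta^*=-1$ built into the definition \eqref{eq:P_alt} of $P(x^*,\eta^*)$ is precisely what pins down the strict inequality \eqref{eq:gen_feascuts_1}, turning it from a homogeneous condition into the concrete value $+1$.

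For part (a) I would take $(\gamma,\gamma_0)\in P(x^*,\eta^*)$ and set $\pi^\top:=\gamma^\top H$, $\pi_0:=-\gamma_0$. Then $\gamma\in\posreals^m$ is a certificate in the sense of \cref{thm:gen_feascuts}: \eqref{eq:gen_feascuts_2} becomes $\gamma^\top A+\gamma_0 d^\top=0$, which holds by membership in $P(x^*,\eta^*)$; the equation $\gamma^\top H=\pi^\top$ and the sign condition $\pi_0\le 0$ hold by construction (using $\gamma_0\ge 0$); and
\[
	(\pi^\top,\pi_0)\colvec{x^*\\\eta^*}-\gamma^\top b
	= \gamma^\top Hx^*-\gamma_0\eta^*-\gamma^\top b
	= -\bigl(\gamma^\top(b-Hx^*)+\gamma_0\eta^*\bigr) = 1 > 0
\]
verifies \eqref{eq:gen_feascuts_1}. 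Hence $(\pi,\pi_0)$ is the normal vector of an $(x^*,\eta^*)$-separating halfspace. For the concrete inequality I would invoke the observation recorded right after the proof of \cref{thm:gen_feascuts}: any feasible certificate gives $\gamma^\top b\ge h_{\epi(\subobj)}(\pi,\pi_0)$, so $\gamma^\top Hx-\gamma_0\eta=\pi^\top x+\pi_0\eta\le\gamma^\top b$ for all $(x,\eta)\in\epi(\subobj)$, while at $(x^*,\eta^*)$ the left-hand side equals $\gamma^\top b+1$, so it is violated there.

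For part (b) I would start from a separating normal vector $(\pi,\pi_0)$, use \cref{thm:gen_feascuts} to obtain $\bar\gamma\in\posreals^m$ satisfying \crefrange{eq:gen_feascuts_1}{eq:gen_feascuts_last}, put $\delta:=(\pi^\top,\pi_0)\colvec{x^*\\\eta^*}-\bar\gamma^\top b>0$, and rescale: $\gamma:=\bar\gamma/\delta\ge 0$, $\gamma_0:=-\pi_0/\delta\ge 0$. The homogeneous relations \eqref{eq:gen_feascuts_2} and $\bar\gamma^\top H=\pi^\top$ survive the rescaling, so $\gamma^\top A+\gamma_0 d^\top=0$ and $\gamma^\top H=\pi^\top/\delta$, and
\[
	\gamma^\top(b-Hx^*)+\gamma_0\eta^* = \frac{1}{\delta}\bigl(\bar\gamma^\top b-\pi^\top x^*-\pi_0\eta^*\bigr) = \frac{-\delta}{\delta} = -1,
\]
so that $(\gamma,\gamma_0)\in P(x^*,\eta^*)$; finally $(\gamma^\top H,-\gamma_0)=\frac{1}{\delta}(\pi^\top,\pi_0)$, so $\lambda:=1/\delta$ does the job.

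There is no genuine obstacle here — the corollary is essentially a repackaging of \cref{thm:gen_feascuts}. The only care needed is in matching the $-1$ normalization of \eqref{eq:P_alt} against \eqref{eq:gen_feascuts_1}, keeping the sign identity $\pi_0=-\gamma_0\le 0$ consistent, and noting that part (b) asserts only a statement about the direction of the cut (so there is no right-hand side to reconcile), whereas part (a) additionally uses $\gamma^\top b$ as the right-hand side that makes the inequality valid for $\epi(\subobj)$.
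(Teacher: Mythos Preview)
Your proposal is correct and is exactly the argument the paper intends: the corollary is stated without proof precisely because it is an immediate repackaging of \cref{thm:gen_feascuts} under the identification $(\pi^\top,\pi_0)=(\gamma^\top H,-\gamma_0)$, with the $-1$ normalization in \eqref{eq:P_alt} fixing the homogeneity in \eqref{eq:gen_feascuts_1}. Your handling of the right-hand side in part~(a) via the remark following \cref{thm:gen_feascuts} and the rescaling by $\delta$ in part~(b) are the natural details to fill in.
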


Observe, however, that in contrast to \cref{thm:gen_supports_Q}, \cref{thm:alt_poly_all_cuts} does not guarantee that the cut generated from a point in the alternative polyhedron is supporting: A given vector $(\gamma,\gamma_0) \in P(x^*,\eta^*)$ might not minimize $\gamma^\top b$ among all points in $P(x^*,\eta^*)$ which lead to the same cut normal. Indeed, this is precisely what we observed in \cref{ex:alt_poly_rev_polar}, which shows that even a cut generated from a vertex of the alternative polyhedron might not be supporting.

Alternatively, as argued by \textcite{Cornuejols:2006}, we can characterize the set of
normals of separating cuts by the \emph{reverse polar set} of $\epi(\subobj)-(x^*,\eta^*)$, which is introduced in \textcite{Balas:1964} and defined as follows:

\begin{definition}
	Let $C \subseteq \reals^n$ be a convex set. Then the \emph{reverse polar set} $C^-$ of $C$ is defined
  as
	\begin{equation*}
		C^- := \Set*{c \in \reals^n \given c^\top x \leq -1 \text{ forall } x \in C}.
	\end{equation*}
	It is thus a subset of the \emph{polar cone}
	\begin{equation*}
		\pos(C)^\circ := \Set*{c \in \reals^n \given c^\top x \leq 0 \text{ forall } x \in C}.
	\end{equation*}

\end{definition}

 Note that by this definition, the reverse polar set is given by a, possibly infinite, intersection of halfspaces (an \emph{$\CH$-representation})
. If $C$ is a polyhedron, it is actually sufficient to consider halfspaces corresponding to vertices of $C$.  Nonetheless, even for a polyhedron $C$,
efficiently computing an $\CH$-representation of $C^-$ may not be possible in general
as computing all the vertices of $C$, when given in $\CH$-representation, is NP-hard \cite{Khachiyan:2008}).
Given an explicit $\CH$-representation of $C$, there does exist however an extended formulation for $C^-$ based on the coefficients from a convex combination of the \emph{vertices} of $C^-$, which can be easily obtained.

Even without an explicit $\CH$-representation of the set $\epi(z)$ (which is itself known to us only by its extended formulation \cref{eq:gen_epi}), we can use \cref{thm:gen_feascuts} and the fact that the set defined by the inequalities \crefrange{eq:gen_feascuts_1}{eq:gen_feascuts_last} is homogenous, to obtain the following extended formulation of the reverse polar set:
	\begin{equation}
		(\epi(\subobj)-(x^*,\eta^*))^- =
		\Set*{(\pi,\pi_0) \in \reals^n\times\reals \given \exists \gamma \geq 0:
						\begin{aligned}
							(\pi^\top,\pi_0) \colvec{x^*\\\eta^*} - \gamma^\top b &\geq 1\\
							A^\top \gamma - \pi_0 d &= 0\\
							H^\top \gamma &= \pi\\
							\pi_0 &\leq 0
						\end{aligned}}.\label{eq:gen_rev_polar}
	\end{equation}

Note furthermore that, as a consequence of \cref{thm:gen_supports_Q}, we can compute for any given normal vector $(\pi,\pi_0)$ a supporting inequality (if one exits) by solving problems \cref{eq:gen_feascuts_primal} or \cref{eq:gen_feascuts_dual}.

	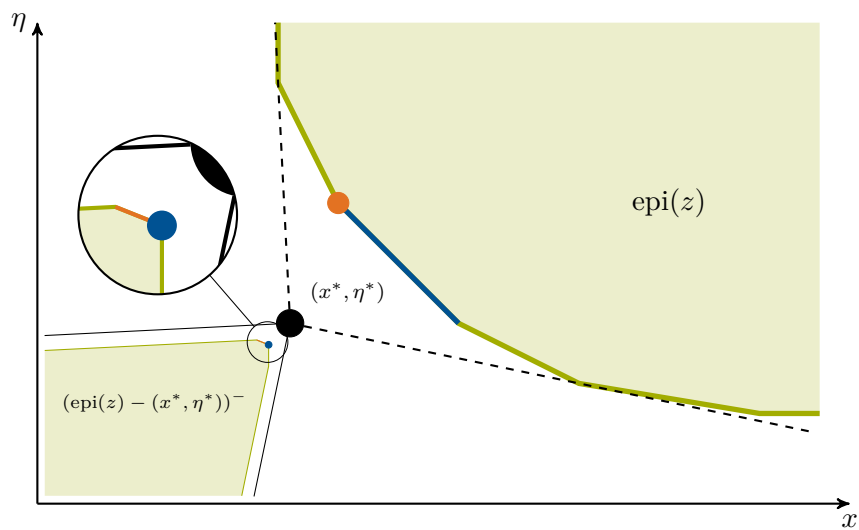
\begin{figure}
		\centering
		\begin{tikzpicture}[spy using outlines={circle, magnification=3.9, size=60, connect spies}, scale=0.8]
			\draw[thick, ->, >=stealth'] (-3,-1) -- (-3,7) node[left] {$\eta$};
			\draw[thick, ->, >=stealth'] (-3,-1) -- (10.5,-1) node[below] {$x$};

			\fill[highlight] (1,7) -- (1,6) -- (2,4) -- (4,2) -- (6,1) -- (9,0.5) -- (10,0.5) -- (10,7) -- cycle;
			\draw[line width=2pt, accentuating green] (1,7) -- (1,6) -- (2,4) -- (4,2) -- (6,1) -- (9,0.5) -- (10,0.5);

			\node[draw, fill, circle, label=above right:{\scriptsize $(x^*,\eta^*)$}] (sol) at (1.2,2) {};
			\node at (7.5,4) {$\epi(\subobj)$};

			\draw[thick, dashed] (sol) -- ($(sol)!1.25!(1,6)$);
			\draw[thick, dashed] (sol) -- ($(sol)!1.8!(6,1)$);
			\draw[] (sol) -- ($(sol)!1.02!90:(1,6)$);
			\draw[] (sol) -- ($(sol)!0.6!-90:(6,1)$);

			\draw[accentuating green]
					\pgfextra{\myspecialnode{(sol)-(1,6)!(sol)!(2,4)}}
					($($(sol)!-\myval!($(1,6)!(sol)!(2,4)$)$)!-6.2!($(sol)!($(sol)!-\myval!($(1,6)!(sol)!(2,4)$)$)!($(sol)!0.2!(1,6)$)$)$)
																		--	($(sol)!-\myval!($(1,6)!(sol)!(2,4)$)$)
					\pgfextra{\myspecialnode{(sol)-(2,4)!(sol)!(4,2)}}	--	($(sol)!-\myval!($(2,4)!(sol)!(4,2)$)$)
					\pgfextra{\myspecialnode{(sol)-(4,2)!(sol)!(6,1)}}	--	($(sol)!-\myval!($(4,2)!(sol)!(6,1)$)$)
						-- ($($(sol)!-\myval!($(4,2)!(sol)!(6,1)$)$)!-2.85!($(sol)!($(sol)!-\myval!($(4,2)!(sol)!(6,1)$)$)!($(sol)!0.2!(6,1)$)$)$);

			\fill[highlight]
				\pgfextra{\myspecialnode{(sol)-(1,6)!(sol)!(2,4)}}
				($($(sol)!-\myval!($(1,6)!(sol)!(2,4)$)$)!-6.2!($(sol)!($(sol)!-\myval!($(1,6)!(sol)!(2,4)$)$)!($(sol)!0.2!(1,6)$)$)$)
																	--	($(sol)!-\myval!($(1,6)!(sol)!(2,4)$)$)
				\pgfextra{\myspecialnode{(sol)-(2,4)!(sol)!(4,2)}}	--	($(sol)!-\myval!($(2,4)!(sol)!(4,2)$)$)
				\pgfextra{\myspecialnode{(sol)-(4,2)!(sol)!(6,1)}}	--	($(sol)!-\myval!($(4,2)!(sol)!(6,1)$)$)
					-- ($($(sol)!-\myval!($(4,2)!(sol)!(6,1)$)$)!-2.85!($(sol)!($(sol)!-\myval!($(4,2)!(sol)!(6,1)$)$)!($(sol)!0.2!(6,1)$)$)$)
				\pgfextra{\myspecialnode{(sol)-(1,6)!(sol)!(2,4)}}	-|
				($($(sol)!-\myval!($(1,6)!(sol)!(2,4)$)$)!-6.2!($(sol)!($(sol)!-\myval!($(1,6)!(sol)!(2,4)$)$)!($(sol)!0.2!(1,6)$)$)$);

			\node[font=\tiny] at ($(sol)+(-2.25,-1.3)$) {$(\epi(\subobj) - (x^*,\eta^*))^-$};
			\spy on ($(sol)- (0.3,0.25)$) in node at (-1,3.8);

			\draw[line width=2pt, tumblue] (2,4) -- (4,2);
			\draw[accentuating orange]
					\pgfextra{\myspecialnode{(sol)-(1,6)!(sol)!(2,4)}}		($(sol)!-\myval!($(1,6)!(sol)!(2,4)$)$)
					\pgfextra{\myspecialnode{(sol)-(2,4)!(sol)!(4,2)}}	--	($(sol)!-\myval!($(2,4)!(sol)!(4,2)$)$);
			\node[gridnode, minimum size=8pt, accentuating orange, fill] at (2,4) {};
			\draw \pgfextra{\myspecialnode{(sol)-(2,4)!(sol)!(4,2)}} node[gridnode, minimum size=2.5pt, tumblue, fill] at ($(sol)!-\myval!($(2,4)!(sol)!(4,2)$)$) {};
		\end{tikzpicture}
		\caption{The reverse polar set $(\epi(\subobj) - (x^*,\eta^*))^-$ and the corresponding polar cone (drawn in a coordinate system with $(x^*,\eta^*)$ as the origin). It can be seen that $(\epi(\subobj) - (x^*,\eta^*))^-$ is contained in the polar cone $\pos(\epi(\subobj) - (x^*,\eta^*))^\circ$ (indicated by the black solid lines) but offers a \enquote{richer} boundary from which we can choose cut normals. Specifically, for each vertex $v$ of $(\epi(\subobj) - (x^*,\eta^*))^-$ there exists a facet of $\epi(\subobj)$ with normal vector $v$ and vice versa (see \cref{thm:rev_polar_facets}).}
		\label{fig:gen_reverse_polar}
	\end{figure}

We thus have at our disposal two alternative characterizations of the set of possible normal vectors of $(x^*,\eta^*)$-separating halfspaces: The alternative polyhedron and the reverse polar set. Despite their similarity, subtle differences exist between both representations that affect their usefulness for the generation of Benders cuts.

It should be noted at this point that we are not the first ones to notice the similarity between the approaches of  \textcite{Cornuejols:2006} and \textcite{Fischetti:2010}. Indeed, the work of \textcite{Cornuejols:2006} is explicitly cited in \cite{Fischetti:2010}, albeit only in a remark about the possibility to exchange normalization and objective function in optimization problems over the alternative polyhedron (see \cref{thm:gen_alt_rep_alt_poly} below).

%

Before we proceed, we introduce a variant of the alternative polyhedron, the \emph{relaxed alternative polyhedron}, which is also used in \cite{Gleeson:1990}. We will see that it is equivalent to the original alternative polyhedron for almost all purposes, but can more easily be connected to the reverse polar set:

\begin{definition}\label{def:relaxed_alt_polyhedron}
	Let a problem of the form \cref{eq:gen_problem} and a point $(x^*, \eta^*) \in \reals^n \times \reals$ be given. The \emph{relaxed alternative polyhedron} $P^\leq(x^*,\eta^*)$ is defined as
	\begin{equation*}
		P^\leq(x^*,\eta^*) := \Set*{\gamma,\gamma_0 \geq 0 \given \begin{aligned}
										\gamma^\top A + \gamma_0 d^\top &= 0\\
										\gamma^\top(b-Hx^*)+\gamma_0\eta^* &\leq -1
										\end{aligned}}.
	\end{equation*}
\end{definition}

To motivate the above definition, observe that optimization problems over the original and the relaxed alternative polyhedron are equivalent, provided that the optimization problem over the relaxed alternative polyhedron has a finite non-zero optimum:

\begin{remark}\label{thm:alt_poly_relaxed_original}
	Let $\subobj$ be defined as in \cref{eq:gen_eta} and let $(x^*,\eta^*) \in \reals^n \times \reals$. Let $(\tilde\omega, \tilde\omega_0) \in \reals^m \times \reals$ be such that $\max\Set{\tilde\omega^\top \gamma + \tilde\omega_0\gamma_0 \given \gamma,\gamma_0 \in P^\leq(x^*,\eta^*)} < 0$.
	Then the sets of optimal solutions for $\tilde\omega^\top \gamma + \tilde\omega_0\gamma_0$ over $P^\leq(x^*,\eta^*)$ and $P(x^*,\eta^*)$ are identical.
	Furthermore, every vertex of $P^\leq(x^*,\eta^*)$ is also a vertex of $P(x^*,\eta^*)$.
\end{remark}

The following key theorem now becomes a trivial observation. However, to our knowledge, the relation between the alternative polyhedron and the reverse polar set has not been made explicit in a similar fashion before.

\begin{theorem}\label{lem:P_alt_reverse_polar}
	Let $\subobj$ be defined as in \cref{eq:gen_eta} and $(x^*, \eta^*) \in \reals^n \times \reals$. Then
	\begin{equation*}
		(\epi(\subobj)-(x^*,\eta^*))^- = \begin{pmatrix} H^\top & 0 \\ 0 & -1 \end{pmatrix} \cdot P^\leq(x^*,\eta^*).
	\end{equation*}
\end{theorem}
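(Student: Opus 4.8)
The plan is to read the identity off directly from the extended formulation \eqref{eq:gen_rev_polar} of the reverse polar set established above, together with the defining inequalities of $P^\leq(x^*,\eta^*)$ from \cref{def:relaxed_alt_polyhedron}. Writing $M := \begin{pmatrix} H^\top & 0 \\ 0 & -1 \end{pmatrix}$, the linear map $(\gamma,\gamma_0) \mapsto M(\gamma,\gamma_0)$ sends a point of $\reals^m\times\reals$ to $(H^\top\gamma,-\gamma_0) \in \reals^n\times\reals$. So the whole statement reduces to showing that, under the substitution $\pi = H^\top\gamma$ and $\pi_0 = -\gamma_0$, membership of $(\gamma,\gamma_0)$ in $P^\leq(x^*,\eta^*)$ is equivalent to $(\pi,\pi_0)$ satisfying the system in \eqref{eq:gen_rev_polar} with $\gamma$ itself serving as the existential witness.

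For the inclusion $M \cdot P^\leq(x^*,\eta^*) \subseteq (\epi(\subobj)-(x^*,\eta^*))^-$ I would take $(\gamma,\gamma_0) \in P^\leq(x^*,\eta^*)$, set $(\pi,\pi_0) := (H^\top\gamma,-\gamma_0)$, and verify the four conditions of \eqref{eq:gen_rev_polar} using the \emph{same} $\gamma$ as witness: the requirements $\gamma \ge 0$, $\pi_0 = -\gamma_0 \le 0$ and $H^\top\gamma = \pi$ are immediate; the equation $A^\top\gamma - \pi_0 d = 0$ is the transpose of $\gamma^\top A + \gamma_0 d^\top = 0$; and the normalization inequality follows from the identity
\[
  (\pi^\top,\pi_0)\colvec{x^*\\\eta^*} - \gamma^\top b = -\bigl(\gamma^\top(b-Hx^*)+\gamma_0\eta^*\bigr) \ge 1,
\]
which holds precisely because $\gamma^\top(b-Hx^*)+\gamma_0\eta^* \le -1$.

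For the reverse inclusion I would take $(\pi,\pi_0) \in (\epi(\subobj)-(x^*,\eta^*))^-$ together with a witness $\gamma \ge 0$ as in \eqref{eq:gen_rev_polar}, and put $\gamma_0 := -\pi_0 \ge 0$. Then $(H^\top\gamma,-\gamma_0) = (\pi,\pi_0)$ by the third and fourth lines of \eqref{eq:gen_rev_polar}, and the same two elementary manipulations as above (transposing $A^\top\gamma - \pi_0 d = 0$, and reading the displayed identity in the other direction) show that $(\gamma,\gamma_0)$ satisfies the defining system of $P^\leq(x^*,\eta^*)$; hence $(\pi,\pi_0)$ lies in the image $M \cdot P^\leq(x^*,\eta^*)$.

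As already indicated before the statement, there is no genuine obstacle here: once \eqref{eq:gen_rev_polar} is available, the theorem is a bookkeeping exercise in transposition and sign conventions. The only point that needs a moment's care is the matching of the normalization constraints (``$\ge 1$'' in \eqref{eq:gen_rev_polar} versus ``$\le -1$'' in \cref{def:relaxed_alt_polyhedron}), which is exactly the content of the displayed identity, together with the observation that the existentially quantified $\gamma$ in \eqref{eq:gen_rev_polar} is literally the $\gamma$-block of the preimage point, so that no separate projection argument is required.
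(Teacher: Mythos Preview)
Your proposal is correct and matches the paper's intended approach: the paper does not give a separate proof, calling the theorem ``a trivial observation'' once the extended formulation \eqref{eq:gen_rev_polar} and \cref{def:relaxed_alt_polyhedron} are in hand. Your two-inclusion verification is exactly the bookkeeping the paper leaves to the reader.
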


\subsection{Cut-Generating Linear Programs}
\label{sec:gen_cut_gen_opt_probs}

One way to select a particular cut normal from the reverse polar set or the alternative polyhedron is by maximizing a linear objective function over these sets. Using \cref{lem:P_alt_reverse_polar}, we can derive the precise relation between optimization problems over the reverse polar set and the alternative polyhedron.

\begin{corollary}\label{thm:dual_problem_equiv}
	Let $\subobj$ be defined as in \cref{eq:gen_eta}, $(x^*,\eta^*), (\omega,\omega_0) \in \reals^n \times \reals$ and
\begin{equation}
	(\tilde \omega, \tilde \omega_0) := (H\omega, -\omega_0).\label{eq:dual_problem_equiv_obj}
\end{equation}
	Then $(\pi,\pi_0)$ is an optimal solution to the problem
\begin{equation}
	\max \Set*{\omega^\top \pi + \omega_0 \pi_0 \given (\pi, \pi_0) \in (\epi(\subobj)-(x^*,\eta^*))^-}\label{eq:dual_problem_equiv_rev}
\end{equation}
if and only if there exists $\gamma^*$ such that $H^\top\gamma^* = \pi$ and $(\gamma^*,-\pi_0)$ is an optimal solution to the problem
\begin{equation}
	\max \Set*{\tilde \omega^\top \gamma + \tilde \omega_0 \gamma_0 \given (\gamma, \gamma_0) \in P^\leq(x^*,\eta^*)}.\label{eq:dual_problem_equiv_alt}
\end{equation}
Furthermore, the objective values of both optimization problems are identical.
\end{corollary}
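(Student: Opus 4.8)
The plan is to read the statement straight off \cref{lem:P_alt_reverse_polar}. Write $M := \begin{pmatrix} H^\top & 0 \\ 0 & -1 \end{pmatrix}$, so that \cref{lem:P_alt_reverse_polar} asserts $(\epi(\subobj)-(x^*,\eta^*))^- = M \cdot P^\leq(x^*,\eta^*)$, i.e.\ the reverse polar set is precisely the image of the relaxed alternative polyhedron under the linear map $M$. In particular, $(\pi,\pi_0) \in (\epi(\subobj)-(x^*,\eta^*))^-$ if and only if $(\pi,\pi_0) = M(\gamma,\gamma_0) = (H^\top\gamma,-\gamma_0)$ for some $(\gamma,\gamma_0) \in P^\leq(x^*,\eta^*)$, and the last coordinate then forces $\gamma_0 = -\pi_0$.

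The second ingredient is that the two objective functions agree along $M$. For any $(\gamma,\gamma_0)$ and $(\pi,\pi_0) = M(\gamma,\gamma_0)$,
\[
  \omega^\top\pi + \omega_0\pi_0 = \omega^\top(H^\top\gamma) + \omega_0(-\gamma_0) = (H\omega)^\top\gamma + (-\omega_0)\gamma_0 = \tilde\omega^\top\gamma + \tilde\omega_0\gamma_0,
\]
by the definition \cref{eq:dual_problem_equiv_obj} of $(\tilde\omega,\tilde\omega_0)$. Since $M$ maps $P^\leq(x^*,\eta^*)$ \emph{onto} $(\epi(\subobj)-(x^*,\eta^*))^-$, maximizing $\omega^\top\pi+\omega_0\pi_0$ over the reverse polar set is the same optimization, pushed through $M$, as maximizing $\tilde\omega^\top\gamma+\tilde\omega_0\gamma_0$ over $P^\leq(x^*,\eta^*)$; in particular the two optimal values coincide, which gives the last sentence of the corollary.

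For the correspondence of optimizers I would argue both directions. If $(\pi,\pi_0)$ is optimal for \cref{eq:dual_problem_equiv_rev}, choose by \cref{lem:P_alt_reverse_polar} a preimage $(\gamma^*,\gamma_0^*) \in P^\leq(x^*,\eta^*)$ with $H^\top\gamma^* = \pi$ and $\gamma_0^* = -\pi_0$. Any $(\gamma',\gamma_0') \in P^\leq(x^*,\eta^*)$ with $\tilde\omega^\top\gamma'+\tilde\omega_0\gamma_0' > \tilde\omega^\top\gamma^*+\tilde\omega_0\gamma_0^*$ would, via the displayed identity, produce a point $M(\gamma',\gamma_0')$ of the reverse polar set with strictly larger $\omega$-value, contradicting optimality of $(\pi,\pi_0)$; hence $(\gamma^*,-\pi_0)$ is optimal for \cref{eq:dual_problem_equiv_alt}. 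Conversely, if $H^\top\gamma^* = \pi$ and $(\gamma^*,-\pi_0)$ is optimal for \cref{eq:dual_problem_equiv_alt}, then $M(\gamma^*,-\pi_0) = (\pi,\pi_0)$ lies in $(\epi(\subobj)-(x^*,\eta^*))^-$, and every competitor there is the $M$-image of some point of $P^\leq(x^*,\eta^*)$, so its $\omega$-value is bounded by the optimal $\tilde\omega$-value $= \omega^\top\pi + \omega_0\pi_0$; thus $(\pi,\pi_0)$ is optimal.

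There is no genuine obstacle once \cref{lem:P_alt_reverse_polar} is available — the corollary is essentially a change of variables. The only points requiring a little care are the sign bookkeeping $\gamma_0 = -\pi_0$ coming from the bottom row of $M$, and the fact that $M$ need not be injective, so a given $(\pi,\pi_0)$ in the reverse polar set may admit several valid $\gamma^*$; this is exactly why the statement is phrased with "there exists $\gamma^*$" rather than as a bijection of solution sets.
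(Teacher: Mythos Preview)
Your proof is correct and follows essentially the same approach as the paper: both use \cref{lem:P_alt_reverse_polar} to exhibit the reverse polar set as the linear image of $P^\leq(x^*,\eta^*)$, verify that the two objective functions agree under this map, and then argue both directions of optimality by pulling back/pushing forward competitors. Your presentation is slightly more streamlined (naming the matrix $M$ and isolating the objective identity once), but the logical content is identical.
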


\begin{proof}
  Let $(\pi,\pi_0)$ be an optimal solution to \cref{eq:dual_problem_equiv_rev}. By \cref{lem:P_alt_reverse_polar}, there exists a vector $\gamma^*$ with $H^\top\gamma = \pi$ such that $(\gamma, -\pi_0) \in P^\leq(x^*,\eta^*)$.
  Now, let $(\gamma', \gamma'_0)$ be an arbitrary point in $P^\leq(x^*,\eta^*)$. By \cref{lem:P_alt_reverse_polar},
  $(H^\top\gamma',-\gamma'_0) \in (\epi(\subobj)-(x^*,\eta^*))^-$ and thus from the optimality of  $(\pi,\pi_0)$ for  \cref{eq:dual_problem_equiv_rev} we obtain
  \begin{multline*}
    \tilde \omega^\top \gamma' + \tilde \omega_0 \gamma'_0 = (H\omega)^\top\gamma'-\omega_0 \gamma'_0 = \omega^\top (H^\top\gamma') + \omega_0 (-\gamma'_0)  \\
    \le \omega^\top \pi + \omega_0 \pi_0 =   (H\omega)^\top \gamma^* - \omega_0 (-\pi_0) = \tilde \omega^\top \gamma^* + \tilde \omega_0 (-\pi_0),
  \end{multline*}
  which proves the optimality of $(\gamma^*,-\pi_0)$ for \cref{eq:dual_problem_equiv_alt}.

  Similarly, let $(\gamma,\gamma_0)$ be an optimal solution to \cref{eq:dual_problem_equiv_alt}, $\pi := H^\top \gamma$, and $\pi_0 := -\gamma_0$, which means by \cref{lem:P_alt_reverse_polar}, $(\pi,\pi_0) \in (\epi(\subobj)-(x^*,\eta^*))^-$.

  Now, let $(\pi',\pi'_0)$ be an arbitrary point in $(\epi(\subobj)-(x^*,\eta^*))^-$. By \cref{lem:P_alt_reverse_polar}, there exists $\gamma'$ with $H^\top \gamma' = \pi'$ such that $(\gamma',-\pi'_0) \in P^\leq(x^*,\eta^*)$. Using the optimality of $(\gamma,\gamma_0)$ for \cref{eq:dual_problem_equiv_alt}, we obtain
  \begin{multline*}
    \omega^\top \pi' + \omega_0 \pi'_0 =  (H\omega)^\top\gamma' - \omega_0 (-\pi'_0) = \tilde \omega^\top \gamma' + \tilde \omega_0 (-\pi'_0)  \\
    \le  \tilde\omega^\top \gamma + \tilde\omega_0 \gamma_0  = \omega^\top H^\top \gamma - \omega_0 \gamma_0 =  \omega^\top \pi + \omega_0 \pi_0,
  \end{multline*}
  which proves the optimality of $(\pi,\pi_0)$ for \cref{eq:dual_problem_equiv_rev}.
\end{proof}

Depending on the particular application, the structure of the matrix $H$ can vary in many ways, but in line with our assumption that the \emph{master} problem should be significantly smaller than the \emph{subproblem}, it is reasonable to assume that $H$ has many more rows than columns.

In this sense, we will henceforth use the relaxed alternative polyhedron as an \emph{extended formulation} for the reverse polar set, which in particular is always polynomial in size. This will allow us to generate Benders cuts from points in the reverse polar set while algorithmically relying on the relaxed alternative polyhedron, an explicit description of which is generally trivial to obtain.

Note that the optimization problem stated in \cref{eq:dual_problem_equiv_alt} is technically more general, since there is no reason to limit ourselves to objective functions of the form \cref{eq:dual_problem_equiv_obj} \emph{a priori}. If we choose a different objective function, we still obtain a valid cut. However, since there may be no objective function $(\omega, \omega_0)$ such that the resulting cut normal is optimal for \cref{eq:dual_problem_equiv_rev}, we lose some of the properties associated with optimal solutions from the reverse polar set.

Indeed, this is the approach that \textcite{Fischetti:2010} take: They use the problem in \cref{eq:dual_problem_equiv_alt} with $\tilde \omega_m = 0$ for all $m$ which correspond to rows of zeros in the interaction matrix $H$ and $\tilde \omega_m = 1$ for all other $m$, as well as $\tilde\omega_0 = 1$ (or some other manual scaling factor). In general, there exists no vector $(\omega,\omega_0)$ such that this choice can be obtained by \cref{eq:dual_problem_equiv_obj}.

We now take a closer look at the role of objective functions in the context of \cref{ex:alt_poly_rev_polar}:

\begin{examplec}[continues={ex:alt_poly_rev_polar}]
	In the situation of the optimization problem \cref{eq:ex_alt_poly_rev_polar}, observe that the point $P_3$ actually minimizes the 1-norm over $P(0,0)$ and is hence the unique result of the (unscaled) selection procedure from \cite{Fischetti:2010}. On the other hand the transformation from \cref{lem:P_alt_reverse_polar} actually maps this point, which lead to a non-supporting cut, to the interior of the reverse polar set. It will therefore never appear as an optimal solution of any linear optimization problem.

  In order to obtain a supporting cut, we only have to make sure that the objective
  that we use can be written in the form $(H\omega, -\omega_0)$. In our example, if we choose the objective function over the alternative polyhedron from the set
	\begin{equation*}
		\Set*{ \left(\colvec{-2\\-\nicefrac{1}{2}\\-4} \cdot \omega,-\omega_0\right) \given \omega, \omega_0 \in \reals},
	\end{equation*}
	then the point $P_3 \in P(0,0)$ is never optimal. 
\end{examplec}

One interesting difference between the alternative polyhedron and the reverse polar set, which can be verified using the above example, is their different behavior with respect to algebraic operations on the set of inequalities: If, for instance, we scale one of the inequalities by a positive factor, the reverse polar set remains unchanged (just as the feasible region defined by the set of inequalities). The alternative polyhedron, on the other hand, is distorted in response to the scaling of the system of inequalities. If an objective function is used which does not take this scaling into account, such as the vector of zeros and ones proposed by \textcite{Fischetti:2010}, then the selected cut might change depending on the scaling factor. Even selecting a suitable manual scaling factor $\tilde\omega_0$ as mentioned above cannot fix this, since it cannot scale individual constraints against each other.

Combining our results from this section, we obtain the following statement:

\begin{corollary}\label{thm:gen_opt_gamma}
	Let $\subobj$ be defined as in \cref{eq:gen_eta} and $(x^*,\eta^*), (\omega,\omega_0) \in \reals^n \times \reals$, $(\tilde \omega, \tilde \omega_0) := (H\omega, -\omega_0)$, and $(\gamma,\gamma_0) \in P(x^*,\eta^*)$
	be maximal with respect to the objective $(\tilde \omega, \tilde \omega_0)$ such that $\tilde \omega^\top \gamma + \tilde \omega_0 \gamma_0 < 0$.
	Then the inequality $\gamma^\top H x - \gamma_0 \eta \leq \gamma^\top b$ supports $\epi(\subobj)$.
\end{corollary}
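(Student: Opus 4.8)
The plan is to identify $(\pi,\pi_0):=(H^\top\gamma,-\gamma_0)$ as the normal vector of a supporting halfspace for $\epi(\subobj)$ with right-hand side $\gamma^\top b$; since $\pi^\top x+\pi_0\eta=\gamma^\top Hx-\gamma_0\eta$, this is exactly the assertion. The engine for the final step is \cref{thm:gen_supports_Q}: it suffices to verify that $(\pi,\pi_0)$ is the normal of an $(x^*,\eta^*)$-separating halfspace for $\epi(\subobj)$ and that $\gamma$ minimizes $\gamma'^\top b$ over \emph{all} certificates for $(\pi,\pi_0)$ in the sense of \cref{thm:gen_feascuts}.

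First, the easy part. Since $(\gamma,\gamma_0)\in P(x^*,\eta^*)$, expanding the definition shows that $\gamma$ (with $\gamma_0=-\pi_0$) fulfils \crefrange{eq:gen_feascuts_1}{eq:gen_feascuts_last} for $(\pi,\pi_0)$; in particular $\pi^\top x^*+\pi_0\eta^*-\gamma^\top b=-\bigl(\gamma^\top(b-Hx^*)+\gamma_0\eta^*\bigr)=1>0$, so by \cref{thm:alt_poly_all_cuts} the pair $(\pi,\pi_0)$ is indeed a separating normal, and we record the identity $\gamma^\top b=\pi^\top x^*+\pi_0\eta^*-1$. Next, because $(\gamma,\gamma_0)\in P(x^*,\eta^*)\subseteq P^\le(x^*,\eta^*)$ and any point of $P^\le(x^*,\eta^*)$ with nonnegative objective value rescales (by a factor $\ge 1$) to a point of $P(x^*,\eta^*)$ with nonnegative objective value, the maxima of $(\tilde\omega,\tilde\omega_0)$ over $P(x^*,\eta^*)$ and over $P^\le(x^*,\eta^*)$ agree; hence the latter is also $<0$, \cref{thm:alt_poly_relaxed_original} applies, and $(\gamma,\gamma_0)$ is a maximizer over $P^\le(x^*,\eta^*)$ as well while, crucially, \emph{every} maximizer over $P^\le(x^*,\eta^*)$ already lies in $P(x^*,\eta^*)$.

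For the core step, suppose $\hat\gamma\ge 0$ is a certificate for $(\pi,\pi_0)$ with $\hat\gamma^\top b<\gamma^\top b$; we derive a contradiction. From $\hat\gamma^\top H=\pi^\top$ (hence $\hat\gamma^\top Hx^*=\pi^\top x^*$) and the recorded identity, a one-line computation gives $\hat\gamma^\top(b-Hx^*)+\gamma_0\eta^*=\hat\gamma^\top b-\gamma^\top b-1<-1$, so $(\hat\gamma,\gamma_0)$ satisfies the linear constraints defining $P^\le(x^*,\eta^*)$ but its normalization is strictly below $-1$, i.e.\ $(\hat\gamma,\gamma_0)\in P^\le(x^*,\eta^*)\setminus P(x^*,\eta^*)$. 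On the other hand, since $(\tilde\omega,\tilde\omega_0)=(H\omega,-\omega_0)$ and $H^\top\hat\gamma=\pi=H^\top\gamma$, the objective values coincide: $\tilde\omega^\top\hat\gamma+\tilde\omega_0\gamma_0=\omega^\top H^\top\hat\gamma-\omega_0\gamma_0=\omega^\top H^\top\gamma-\omega_0\gamma_0=\tilde\omega^\top\gamma+\tilde\omega_0\gamma_0$. Thus $(\hat\gamma,\gamma_0)$ is a maximizer of $(\tilde\omega,\tilde\omega_0)$ over $P^\le(x^*,\eta^*)$, which by the previous paragraph forces $(\hat\gamma,\gamma_0)\in P(x^*,\eta^*)$ --- a contradiction. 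Hence $\gamma$ minimizes $\gamma'^\top b$ over all certificates, and \cref{thm:gen_supports_Q} yields that $\{(x,\eta):\gamma^\top Hx-\gamma_0\eta\le\gamma^\top b\}$ supports $\epi(\subobj)$.

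The main obstacle is precisely this core step: one must see that a certificate achieving a strictly smaller value of $\gamma'^\top b$ would, paired with the same $\gamma_0$, still maximize the chosen objective over the relaxed alternative polyhedron while breaking the tight normalization $\gamma^\top(b-Hx^*)+\gamma_0\eta^*=-1$, a configuration excluded by \cref{thm:alt_poly_relaxed_original}. An equivalent route runs through \cref{thm:dual_problem_equiv}: then $(\pi,\pi_0)$ is a linear maximizer with negative optimal value over the reverse polar set $(\epi(\subobj)-(x^*,\eta^*))^-$, and complementary slackness against its $\CH$-representation (one inequality $c^\top v\le-1$ per vertex $v$ of $\epi(\subobj)-(x^*,\eta^*)$) forces one such inequality to be tight --- which is the supporting condition --- the only technical nuisance being the case in which $\epi(\subobj)-(x^*,\eta^*)$ fails to be pointed.
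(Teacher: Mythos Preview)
Your proof is correct and follows essentially the same route as the paper: reduce via \cref{thm:gen_supports_Q} to showing that $\gamma$ minimizes $\gamma'^\top b$ among certificates, then derive a contradiction from a hypothetical better certificate by invoking the equivalence of optimizers over $P$ and $P^\le$ from \cref{thm:alt_poly_relaxed_original}; both arguments hinge on the identity $\tilde\omega^\top\hat\gamma=\omega^\top H^\top\hat\gamma=\omega^\top\pi=\tilde\omega^\top\gamma$. One small slip: to rescale a point of $P^\le(x^*,\eta^*)$ into $P(x^*,\eta^*)$ you need a factor in $(0,1]$, not $\ge 1$ (the normalization is $\le -1$ and must be brought up to $-1$); with that correction your claim that the maxima over $P$ and $P^\le$ coincide is fine. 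The only cosmetic difference from the paper is that it scales $(\gamma',\gamma_0)$ by $\lambda\in(0,1)$ to produce a point of $P^\le$ with \emph{strictly larger} objective, whereas you leave $(\hat\gamma,\gamma_0)$ unscaled and use that it is an optimizer of $P^\le$ lying outside $P$.
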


\begin{proof}
	Let $(\pi,\pi_0) := (H^\top \gamma, -\gamma_0)$. Then, by \cref{thm:gen_supports_Q}, the statement is true if $\gamma$ minimizes $\gamma^\top b$ among all possible certificates for the vector $(\pi,\pi_0)$ in \cref{thm:gen_feascuts}. It is easy to verify that $\gamma$ is indeed a valid certificate for $(\pi,\pi_0)$ in \cref{thm:gen_feascuts}. For a contradiction, we hence assume that it does not minimize $\gamma^\top b$. Let thus $\gamma' \geq 0$ be an alternative certificate for $(\pi,\pi_0)$ with $\gamma'^\top b < \gamma^\top b$. Then from \crefrange{eq:gen_feascuts_1}{eq:gen_feascuts_last} we obtain that $\gamma'^\top A-\pi_0 d^\top = 0$ and $\gamma'^\top H = \pi^\top$.
	Furthermore, since $(\gamma, \gamma_0) \in P^\leq(x^*,\eta^*)$,
	\begin{equation*}
		\gamma'^\top(b-Hx^*)+\gamma_0\eta^* = \gamma'^\top b - \pi^\top x^* + \gamma_0\eta^* < \gamma^\top b - \pi^\top x^* + \gamma_0\eta^* \leq -1.
	\end{equation*}

	We can thus scale $(\gamma', \gamma_0)$ by an appropriate factor $\lambda \in (0,1)$ to obtain that $\lambda \cdot (\gamma', \gamma_0) \in P^\leq(x^*,\eta^*)$ and
	\begin{align*}
		\tilde \omega^\top (\lambda \gamma') + \tilde \omega_0 \cdot (\lambda \gamma_0)
		&= \lambda \cdot (\tilde\omega^\top \gamma' + \tilde\omega_0 \gamma_0)
    = \lambda \cdot (\tilde\omega^\top \gamma+ \tilde\omega_0 \gamma_0) > \tilde\omega^\top \gamma + \tilde\omega_0 \gamma_0 .
	\end{align*}

	On the other hand, by \cref{thm:alt_poly_relaxed_original}, if $(\gamma,\gamma_0)$ maximizes the objective $(\tilde \omega, \tilde \omega_0)$ over $P(x^*,\eta^*)$, then it is also maximal within $P^\leq(x^*,\eta^*)$, a contradiction.
	By \cref{thm:gen_supports_Q}, this implies that the inequality $\gamma^\top H x - \gamma_0 \eta \leq \gamma^\top b$ does indeed support $\epi(\subobj)$.
\end{proof}

A critical requirement for \cref{thm:gen_opt_gamma} is that $\tilde \omega^\top \gamma + \tilde \omega_0 \gamma_0 < 0$. \Textcite[Theorem 2.3]{Cornuejols:2006} establish some criteria on the objective function for which optimization problems over the reverse polar set are bounded. We have simplified the notation for our purposes and rephrased the relevant parts of the theorem according to our terminology.

\begin{theorem}
  \label{thm:cornuejols_reverse_gauge}
	Let $(x^*,\eta^*) \notin \epi(\subobj)$, $(\omega,\omega_0) \in \reals^n \times \reals$, and
	\begin{equation*}
		z^* := \max \Set*{\omega^\top \pi + \omega_0 \pi_0 \given (\pi,\pi_0) \in (\epi(\subobj)-(x^*,\eta^*))^-}.
	\end{equation*}
	Then
	\begin{equation*}
		z^* \begin{cases}
			\leq 0 & \text{if } (\omega,\omega_0) \in \cl(\pos(\epi(\subobj)-(x^*,\eta^*)))\\
			= \infty & \text{otherwise.}
		\end{cases}
	\end{equation*}
	Furthermore, if $(\omega,\omega_0) \in (\epi(\subobj)-(x^*,\eta^*))$, then $z^* \leq -1$.
\end{theorem}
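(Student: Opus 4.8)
The plan is to prove this directly by convex analysis; the statement is a transcription of \textcite[Theorem~2.3]{Cornuejols:2006} into our notation, so the argument below doubles as a proof of it. Write $C := \epi(\subobj) - (x^*,\eta^*)$. Since $\epi(\subobj)$ is a nonempty polyhedron and $(x^*,\eta^*)\notin\epi(\subobj)$, the set $C$ is nonempty, closed and convex with $0\notin C$. Abbreviating $w := (\omega,\omega_0)$ and $p := (\pi,\pi_0)$, the number $z^*$ is precisely the support function $h_{C^-}(w)=\sup\Set{w^\top p \given p\in C^-}$, so the theorem is a statement about where this support function is $\le 0$, where it is $+\infty$, and a sharper bound when $w$ lies in $C$ itself.

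First I would record two elementary facts. (i) $C^-\neq\emptyset$: because $C$ is closed, convex and does not contain $0$, strict separation yields $a$ with $s:=\sup_{x\in C}a^\top x<0$, and then $a/|s|\in C^-$. (This is the one place closedness of $\epi(\subobj)$ enters.) (ii) The recession cone of $C^-$ equals the polar cone $\pos(C)^\circ$: a vector $d$ satisfies $p+td\in C^-$ for all $p\in C^-$ and all $t\ge 0$ iff $t\,d^\top x\le -1-p^\top x$ for all such $t$ and all $x\in C$, and since $p^\top x\le -1$ already holds, this is equivalent to $d^\top x\le 0$ for all $x\in C$, i.e.\ $d\in\pos(C)^\circ$.

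Then I would treat the three cases. If $w\in\pos C$, write $w=\lambda x_0$ with $\lambda\ge 0$ and $x_0\in C$; then $w^\top p=\lambda\,(p^\top x_0)\le -\lambda\le 0$ for every $p\in C^-$, so $h_{C^-}\le 0$ on $\pos C$. Since a support function is lower semicontinuous, the sublevel set $\Set{w\given h_{C^-}(w)\le 0}$ is closed and hence contains $\cl(\pos C)$, giving $z^*\le 0$ in the first case. If instead $w\notin\cl(\pos C)$, then $w$ can be strongly separated from the closed convex cone $\cl(\pos C)$: there is $d$ with $d^\top w>0$ and $d^\top y\le 0$ for all $y\in\pos C$, hence $d\in\pos(C)^\circ$, which by (ii) is the recession cone of $C^-$. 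Picking any $p_0\in C^-$ (possible by (i)), we have $p_0+td\in C^-$ for all $t\ge 0$ and $w^\top(p_0+td)\to\infty$, so $z^*=\infty$. Finally, if $w\in C$, then by the definition of the reverse polar set every $p\in C^-$ satisfies $p^\top w\le -1$, so $z^*\le -1$.

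I do not expect a genuine obstacle here. The two points that need attention are the edge case $C^-=\emptyset$, which must be excluded (it is, by the strict separation in (i)), and getting the orientation right in the unbounded case — the point being that the directions available in $C^-$ beyond a fixed base point form exactly $\pos(C)^\circ$, so leaving $\cl(\pos C)$ is precisely what lets the support function blow up. One may additionally remark that, since $C^-$ is the image of the polyhedron $P^\leq(x^*,\eta^*)$ under a linear map by \cref{lem:P_alt_reverse_polar}, the supremum defining $z^*$ is attained whenever it is finite, which justifies writing $\max$ in the statement.
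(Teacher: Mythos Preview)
Your argument is correct. Note, however, that the paper does not supply its own proof of this theorem: it is quoted from \textcite[Theorem~2.3]{Cornuejols:2006} and used as a black box, so there is no in-paper proof to compare against. Your write-up is a clean, self-contained derivation of exactly that cited result. The three ingredients you isolate --- nonemptiness of $C^-$ via strict separation of $0$ from the closed convex set $C$, the identification $\recc(C^-)=\pos(C)^\circ$, and the lower semicontinuity of the support function to pass from $\pos C$ to its closure --- are precisely what is needed, and your handling of the unbounded case by separating $w$ from the closed cone $\cl(\pos C)$ and reading off a recession direction is the standard and correct route. Your closing remark that attainment of the maximum follows from \cref{lem:P_alt_reverse_polar} (so that $C^-$ is a polyhedron) is a nice touch that the paper leaves implicit.
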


Note in particular that the last part of the above statement implies $z^* < 0$ whenever $(\omega,\omega_0) \in \pos(\epi(\subobj)-(x^*,\eta^*)) \setminus \Set{0}$, which provides us with a large variety of objective functions for which $ \omega^\top \gamma + \omega_0 \gamma_0 < 0$ in the optimal solution. By \cref{thm:dual_problem_equiv,thm:gen_opt_gamma}, this means that the cut which results from maximizing these objectives over the reverse polar set is guaranteed to be supporting.

\subsection{Alternative Representations}

Finally, to conclude our dictionary of cut-generating optimization problems, we derive an alternative representation of the optimization problem in \cref{eq:dual_problem_equiv_alt}, which will turn out to be much more useful in practice. For instance, the structure of the resulting problem will be very similar to the original subproblem, which makes it easy to use existing solution algorithms for the subproblem in a cut-generating program.

\Textcite[Theorem 4.2]{Cornuejols:2006} prove that linear optimization problems over the reverse polar set can be evaluated in terms of the support function of the original set (in our case $\epi(\subobj)-(x^*,\eta^*)$). This can also be applied to the alternative polyhedron, as mentioned (without proof) by \textcite{Fischetti:2010}. The following lemma makes a statement similar to \textcite[Theorem 4.2]{Cornuejols:2006}, which is applicable to a wider range of settings. For the proof, we refer to \cite[Theorem 3.20]{Stursberg:2019}.

\begin{lemma}\label{thm:gen_exchange_normalization_cone}
	Let $K \subseteq \reals^n$ be a cone and $c_1, c_2 \in \reals^n$. Consider the optimization problems
	\begin{equation}
		\max \Set*{c_1^\top x \given x \in K, c_2^\top x = -1}\label{eq:gen_exchange_normalization_cone_3}
	\end{equation}
	and
	\begin{equation}
		\max \Set*{c_2^\top x \given x \in K, c_1^\top x \geq 1}.\label{eq:gen_exchange_normalization_cone_4}
	\end{equation}
	Then the following hold:
	\begin{enumerate}
		\item If $x^*$ is an optimal solution for \cref{eq:gen_exchange_normalization_cone_3} with objective value $\xi > 0$, then $\frac{1}{\xi} \cdot x^*$ is an optimal solution for \cref{eq:gen_exchange_normalization_cone_4} with objective value $-\frac{1}{\xi}$.
		\item Conversely, if $x^*$ is an optimal solution for \cref{eq:gen_exchange_normalization_cone_4} with objective value $\xi < 0$, then $-\frac{1}{\xi} \cdot x^*$ is an optimal solution for \cref{eq:gen_exchange_normalization_cone_4} with objective value $-\frac{1}{\xi}$.
	\end{enumerate}
\end{lemma}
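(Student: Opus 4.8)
The plan is to prove both parts by direct scaling arguments that use nothing beyond the cone structure of $K$; no duality is needed. The guiding picture is that the feasible set of \eqref{eq:gen_exchange_normalization_cone_3} lies in the hyperplane $\{x : c_2^\top x = -1\}$ while that of \eqref{eq:gen_exchange_normalization_cone_4} lies in the halfspace $\{x : c_1^\top x \ge 1\}$, and that multiplying a point of $K$ by a positive scalar keeps it in $K$ while scaling both objectives by the same factor. The whole argument then consists of taking a candidate point for one problem, rescaling it so that it meets the normalization of the other, and checking which inequality becomes tight at an optimum.

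For part (a), given $x^*$ optimal for \eqref{eq:gen_exchange_normalization_cone_3} with $\xi = c_1^\top x^* > 0$, I would first verify that $\bar x := \tfrac1\xi x^*$ lies in $K$, satisfies $c_1^\top \bar x = 1 \ge 1$, and has $c_2^\top \bar x = -\tfrac1\xi$, so it is feasible for \eqref{eq:gen_exchange_normalization_cone_4} with the claimed value. For optimality, the key step is to show that \emph{every} feasible $y$ of \eqref{eq:gen_exchange_normalization_cone_4} satisfies $c_2^\top y < 0$: if instead $c_2^\top y \ge 0$, then $w(s) := (1 + s\,c_2^\top y)\,x^* + s\,y$ lies in $K$ for every $s \ge 0$ (this is the one place where I use that $K$ is closed under nonnegative \emph{combinations}, i.e.\ a convex cone), satisfies $c_2^\top w(s) = -1$, and has $c_1^\top w(s) = \xi + s(\xi\,c_2^\top y + c_1^\top y) \to \infty$, contradicting that \eqref{eq:gen_exchange_normalization_cone_3} attains the finite value $\xi$. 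Once $c_2^\top y < 0$ is established, the rescaled point $\tfrac{-1}{c_2^\top y}\,y$ is feasible for \eqref{eq:gen_exchange_normalization_cone_3}, so optimality of $x^*$ gives $c_1^\top y \le -\xi\,c_2^\top y$, and combining this with $c_1^\top y \ge 1$ yields $c_2^\top y \le -\tfrac1\xi = c_2^\top \bar x$, which is exactly the asserted optimality.

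For part (b), given $x^*$ optimal for \eqref{eq:gen_exchange_normalization_cone_4} with $\xi = c_2^\top x^* < 0$, I would first argue that the normalization is tight, $c_1^\top x^* = 1$: if $c_1^\top x^* > 1$, then $\lambda x^*$ with $\lambda \in [\,1/c_1^\top x^*,\,1)$ is still feasible for \eqref{eq:gen_exchange_normalization_cone_4} but has $c_2^\top(\lambda x^*) = \lambda\xi > \xi$ (since $\xi < 0$), contradicting optimality. Hence $\tilde x := -\tfrac1\xi x^*$ lies in $K$, satisfies $c_2^\top \tilde x = -1$, and has $c_1^\top \tilde x = -\tfrac1\xi$, so it is feasible for \eqref{eq:gen_exchange_normalization_cone_3} with the claimed value. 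For optimality of $\tilde x$, take any feasible $z$ of \eqref{eq:gen_exchange_normalization_cone_3}: if $c_1^\top z \le 0$ the bound $c_1^\top z \le -\tfrac1\xi$ holds trivially (as $-\tfrac1\xi > 0$), while if $c_1^\top z > 0$, then $\tfrac{1}{c_1^\top z}\,z$ is feasible for \eqref{eq:gen_exchange_normalization_cone_4}, so $\tfrac{-1}{c_1^\top z} \le \xi$, and taking reciprocals of these two negative quantities gives $c_1^\top z \le -\tfrac1\xi$.

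The main obstacle is the claim in part (a) that $c_2^\top y < 0$ on the entire feasible set of \eqref{eq:gen_exchange_normalization_cone_4}; this is the only step that genuinely requires $K$ to be convex (the statement fails for a merely positively scalable cone, as a simple two-ray example shows), and the delicate point is to choose the family $w(s)$ so that it stays on the hyperplane $c_2^\top x = -1$ while its $c_1$-value becomes unbounded. Everything else is sign bookkeeping with reciprocals. I would also note that the printed statement of part (b) contains two typos: the objective value of the rescaled point should read $c_1^\top(-\tfrac1\xi x^*) = -\tfrac1\xi$, and the problem for which it is optimal is \eqref{eq:gen_exchange_normalization_cone_3}, not \eqref{eq:gen_exchange_normalization_cone_4}.
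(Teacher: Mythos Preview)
The paper does not give its own proof of this lemma; it only cites \cite[Theorem~3.20]{Stursberg:2019}. So there is no in-paper argument to compare against, and I can only assess the correctness of your proposal on its own terms.

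Your argument is sound and entirely elementary. Part~(b) works exactly as you outline using nothing more than positive scalability of $K$. For part~(a), your handling of the case $c_2^\top y < 0$ via the rescaling $\tfrac{-1}{c_2^\top y}\,y$ is clean, and your family $w(s) = (1 + s\,c_2^\top y)\,x^* + s\,y$ correctly disposes of the case $c_2^\top y \ge 0$: the coefficients $1 + s\,c_2^\top y$ and $s$ are indeed both nonnegative under that hypothesis, the point stays on the hyperplane $c_2^\top x = -1$, and the $c_1$-value goes to $+\infty$ because $\xi\,c_2^\top y + c_1^\top y \ge 1$. Your observation that this step genuinely requires $K$ to be a \emph{convex} cone, and that the statement fails for a union of two rays, is correct and worth recording; the paper's only application is to the polyhedral cone $K = \{(\gamma,\gamma_0) \ge 0 : \gamma^\top A + \gamma_0 d^\top = 0\}$, so the convexity hypothesis is harmless there. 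Your identification of the two typos in part~(b) of the printed statement (the target problem should be \eqref{eq:gen_exchange_normalization_cone_3}, and the displayed objective value is that of problem \eqref{eq:gen_exchange_normalization_cone_3}) is also correct.
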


This lemma allows us to solve optimization problems of the form \cref{eq:dual_problem_equiv_alt} by instead resorting to the optimization problem
	\begin{align}
		\max_{\gamma, \gamma_0 \geq 0}\ &\gamma^\top (Hx^*-b) - \gamma_0\eta^*\label{eq:gen_exchange_normalization_alt_obj}\\
		&\gamma^\top A + \gamma_0 d^\top = 0\\
		&\tilde\omega^\top \gamma + \tilde\omega_0 \gamma_0 = -1.\label{eq:gen_exchange_normalization_alt_last}
	\end{align}

Let $(\tilde\omega,\tilde\omega_0) \in \reals^m \times \reals$ and let $(\gamma^*,\gamma_0^*)$ denote an optimal solution with value $\xi > 0$ for \crefrange{eq:gen_exchange_normalization_alt_obj}{eq:gen_exchange_normalization_alt_last}. Applying \cref{thm:gen_exchange_normalization_cone} with $c_1 := (Hx^*-b,-\eta^*)$, $c_2 := (\tilde\omega,\tilde\omega_0)$ and $K := \Set{(\gamma,\gamma_0) \geq 0 \given \gamma^\top A + \gamma_0 d^\top = 0}$, we obtain that $\frac{1}{\xi} \cdot (\gamma^*,\gamma_0^*)$ is an optimal solution with value $-\frac{1}{\xi}$ for \cref{eq:dual_problem_equiv_alt}.

%

The structural similarity of \crefrange{eq:gen_exchange_normalization_alt_obj}{eq:gen_exchange_normalization_alt_last} to the original problem becomes more apparent when we consider the dual problem:

\begin{corollary}\label{thm:gen_alt_rep_alt_poly}
	Let $(\tilde\omega,\tilde\omega_0) \in \reals^m \times \reals$ and $(\lambda,x,y)$ be an optimal solution for the problem
	\begin{align}
		\min\ &\lambda\label{eq:gen_alt_rep_alt_poly_obj}\\
			&Ay \leq b - Hx^* - \lambda \, \tilde\omega\\
			&d^\top y  \leq \eta^* -  \lambda \, \tilde\omega_0\label{eq:gen_alt_rep_alt_poly_last}
	\end{align}
	with $\lambda > 0$. Denote the corresponding dual solution by $(\gamma,\gamma_0)$. Then $\frac{1}{\lambda}(\gamma,\gamma_0)$ is an optimal solution for
	\begin{equation*}
		\max \Set*{\tilde\omega^\top \gamma + \tilde\omega_0 \gamma_0 \given (\gamma,\gamma_0) \in P^\leq(x^*,\eta^*)}
	\end{equation*}
	with objective value $-\frac{1}{\lambda}$.
\end{corollary}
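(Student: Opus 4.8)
The plan is to recognize the minimization problem \crefrange{eq:gen_alt_rep_alt_poly_obj}{eq:gen_alt_rep_alt_poly_last} as a linear program in the variables $\lambda$ and $y$ (the variable $x$ does not occur in the constraints and plays no role here), to form its linear-programming dual, and to check that this dual is \emph{literally} the normalized cut-generating program \crefrange{eq:gen_exchange_normalization_alt_obj}{eq:gen_exchange_normalization_alt_last}. Once this identification is in place, the corollary follows by combining strong LP duality with the argument already carried out in the paragraph following \cref{thm:gen_exchange_normalization_cone}, which turns an optimal solution of \crefrange{eq:gen_exchange_normalization_alt_obj}{eq:gen_exchange_normalization_alt_last} of value $\xi>0$ into an optimal solution of \cref{eq:dual_problem_equiv_alt} of value $-\tfrac1\xi$ by scaling with $\tfrac1\xi$.

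To carry out the dualization I would write the two constraint blocks as $Ay + \lambda\tilde\omega \le b - Hx^*$ and $d^\top y + \lambda\tilde\omega_0 \le \eta^*$ and attach nonnegative dual multipliers $\gamma \in \posreals^m$ and $\gamma_0 \in \posreals$, respectively. Since $y$ and $\lambda$ are free, the dual has equality constraints obtained by collecting, in the Lagrangian, the coefficients of $y$ and of $\lambda$: the $y$-terms force $\gamma^\top A + \gamma_0 d^\top = 0$, while the $\lambda$-terms, taking into account the objective coefficient $1$ of $\lambda$, force $1 + \tilde\omega^\top\gamma + \tilde\omega_0\gamma_0 = 0$, i.e.\ the normalization $\tilde\omega^\top\gamma + \tilde\omega_0\gamma_0 = -1$. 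The part of the Lagrangian that is constant in $y$ and $\lambda$ then gives the dual objective $\gamma^\top(Hx^* - b) - \gamma_0\eta^*$, to be maximized. This is exactly \crefrange{eq:gen_exchange_normalization_alt_obj}{eq:gen_exchange_normalization_alt_last}.

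With the identification established, strong LP duality finishes the argument: since \crefrange{eq:gen_alt_rep_alt_poly_obj}{eq:gen_alt_rep_alt_poly_last} is solvable with optimal value $\lambda>0$, its dual is solvable with the same optimal value $\xi := \lambda > 0$, attained at the stated dual solution $(\gamma,\gamma_0)$. Invoking the argument from the paragraph after \cref{thm:gen_exchange_normalization_cone} — namely \cref{thm:gen_exchange_normalization_cone} applied with $c_1 := (Hx^*-b,-\eta^*)$, $c_2 := (\tilde\omega,\tilde\omega_0)$ and $K := \Set{(\gamma,\gamma_0) \geq 0 \given \gamma^\top A + \gamma_0 d^\top = 0}$ — yields that $\tfrac1\xi(\gamma,\gamma_0) = \tfrac1\lambda(\gamma,\gamma_0)$ is optimal for \cref{eq:dual_problem_equiv_alt}, i.e.\ for $\max\Set{\tilde\omega^\top\gamma + \tilde\omega_0\gamma_0 \given (\gamma,\gamma_0)\in P^\leq(x^*,\eta^*)}$, with objective value $-\tfrac1\xi = -\tfrac1\lambda$, which is the assertion.

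I do not expect a genuine obstacle here: the only thing requiring attention is bookkeeping with LP-duality sign conventions, in particular noticing that the constant objective coefficient $1$ of $\lambda$ in the primal is what produces the right-hand side $-1$ of the dual normalization. All the substantive content — the exchange of objective and normalization, and the accompanying reciprocal scaling of objective values — is already packaged in \cref{thm:gen_exchange_normalization_cone} and the discussion preceding this corollary, so the proof is essentially a matter of transcribing the dual correctly and citing what is at hand.
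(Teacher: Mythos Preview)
Your proposal is correct and matches the paper's own (implicit) argument: the paper introduces \crefrange{eq:gen_exchange_normalization_alt_obj}{eq:gen_exchange_normalization_alt_last}, applies \cref{thm:gen_exchange_normalization_cone} with exactly the $c_1$, $c_2$, $K$ you name, and then states the corollary as what one gets ``when we consider the dual problem'' --- which is precisely the LP-dualization you carry out. Your observation that the variable $x$ in the triple $(\lambda,x,y)$ plays no role is also apt; it does not appear in the constraints and is irrelevant to the argument.
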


	Note that, together with our observations in the context of the definition of the alternative polyhedron \cref{eq:P_alt}, this means in particular that
	 \begin{enumerate}
	 	\item whenever \crefrange{eq:gen_alt_rep_alt_poly_obj}{eq:gen_alt_rep_alt_poly_last} has objective value 0, then the alternative polyhedron is empty and $(x^*,\eta^*) \in \epi(\subobj)$, and
	 	\item whenever \crefrange{eq:gen_alt_rep_alt_poly_obj}{eq:gen_alt_rep_alt_poly_last} is feasible with (finite) objective value greater than 0, then \cref{eq:dual_problem_equiv_rev} and \cref{eq:dual_problem_equiv_alt} have objective values strictly less than 0, which means that the requirements for \cref{thm:alt_poly_relaxed_original,thm:gen_opt_gamma} are satisfied.
	 \end{enumerate}


\begin{remark}\label{rem:gen_alt_rep_rev_polar}
If $(\tilde \omega, \tilde \omega_0) := (H\omega, -\omega_0)$, then the optimization problem \crefrange{eq:gen_alt_rep_alt_poly_obj}{eq:gen_alt_rep_alt_poly_last} becomes
	\begin{align}
		\min\ &\lambda\label{eq:gen_alt_rep_rev_polar1}\\
			&Ay \leq b - H(x^* + \lambda \cdot \omega)\\
			&d^\top y \leq \eta^* +\omega_0 \lambda\label{eq:gen_alt_rep_rev_polar3}
	\end{align}
%

The difference between the formulations from \cref{thm:gen_alt_rep_alt_poly} and \cref{rem:gen_alt_rep_rev_polar} lies in how they relax the original problem: In \crefrange{eq:gen_alt_rep_alt_poly_obj}{eq:gen_alt_rep_alt_poly_last}, the relaxation works on the level of individual inequalities by relaxing their right-hand sides, whereas in \crefrange{eq:gen_alt_rep_rev_polar1}{eq:gen_alt_rep_rev_polar3} it works on the level of the master solution $(x^*,\eta^*)$, allowing us to choose a possibly more advantageous value for the vector $x$ itself.
\end{remark}

\section{Cut Selection}
\label{sec:cut_criteria}

As we have seen in the previous section, Benders' decomposition can be viewed as an instance of a classical cutting plane algorithm (\cref{thm:gen_feascuts}). The Benders subproblem takes the role of the separation problem and the alternative polyhedron that is commonly used to select a Benders cut is a higher-dimensional representation (an \emph{extended formulation}) of the reverse polar set, which characterizes all possible cut normals (\cref{lem:P_alt_reverse_polar}).

Finally, \cref{thm:gen_alt_rep_alt_poly,rem:gen_alt_rep_rev_polar} show that selecting a cut normal by a linear objective over the reverse polar set or the alternative polyhedron can be interpreted as two different relaxations \crefrange{eq:gen_alt_rep_alt_poly_obj}{eq:gen_alt_rep_alt_poly_last} and \crefrange{eq:gen_alt_rep_rev_polar1}{eq:gen_alt_rep_rev_polar3} of the original Benders feasibility subproblem \cref{eq:gen_subprob_feas}. The former relaxation provides more flexibility with respect to the choice of parameters and coincides with the latter for a particular selection of the objective function.

Cut selection is one of four major areas of algorithmic improvements for Benders decomposition that recent work has focused on (see, \eg, the very extensive literature review in \cite{Rahmaniani:2017}).
A number of selection criteria for Benders cuts have previously been explicitly proposed in the literature. Many of them also arise naturally from our discussion and analysis of the Benders decomposition algorithm above. We will first present these criteria in the way they typically appear in the literature and then link them to the reverse polar set and/or the alternative polyhedron.

\subsection{Minimal Infeasible Subsystems}
\label{sec:gen_mis}
The work of \textcite{Fischetti:2010} is based on the premise that \enquote{one is interested in detecting a \enquote{minimal source of infeasibility}} whenever the feasibility subproblem \cref{eq:gen_subprob_feas} is empty. They hence suggest to generate Benders cuts based on Farkas certificates that correspond to \emph{minimal infeasible subsystems (MIS)} of \cref{eq:gen_subprob_feas}. \Textcite{Fischetti:2010} empirically study the performance of \MIS-cuts on a set of multi-commodity network design instances. Their results suggest that \MIS-based cut selection outperforms the standard implementation of Benders decomposition by a factor of at least 2-3. Furthermore, this advantage increases substantially when focusing on harder instances (e.g. those which could not be solved by the standard implementation within 10 hours).

We define this criterion as follows:

\begin{definition}
	Let $\subobj$ be defined as in \cref{eq:gen_eta} and let $(\pi,\pi_0) \in \reals^n \times \reals$. We say that $(\pi,\pi_0)$ \emph{satisfies the \MIS criterion} if there exists $(\gamma, \gamma_0) \geq 0$ such that $\pi=H^\top \gamma, \pi_0=-\gamma_0$ and the inequalities which correspond to the non-zero components of $(\gamma,\gamma_0)$ form a minimal infeasible subsystem of \cref{eq:gen_subprob_feas}.
\end{definition}

Note that we have defined the \MIS criterion as a property of a normal vector, rather than a property of a cut. The reason for this is that the cut normal is the only relevant choice to make, given that an optimal right-hand side for each cut normal is provided by \cref{thm:gen_opt_gamma}. Accordingly, we will call any cut with a normal vector that satisfies the \MIS criterion a \MIS-cut.

\Textcite{Gleeson:1990} show that the set of $(\gamma, \gamma_0)$ that appear in the above definition is exactly (up to homogeneity) the set of vertices of the alternative polyhedron:

\begin{theorem}
	Let $(x^*,\eta^*) \in \reals^n \times \reals$. For each vertex $v$ of the (relaxed) alternative polyhedron \cref{eq:P_alt}, the set of constraints corresponding to the non-zero entries of $v$ forms a minimal infeasible subsystem of \cref{eq:gen_subprob_feas}. Conversely, for every minimal infeasible subsystem, there exists a vertex of the alternative polyhedron.
\end{theorem}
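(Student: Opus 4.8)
The plan is to identify the feasibility subsystem \eqref{eq:gen_subprob_feas} with a single system $My \le q$, where $M := \binom{A}{d^\top} \in \reals^{(m+1)\times k}$ and $q := \binom{b - Hx^*}{\eta^*}$, and to index its rows by $\ints{m+1}$. In this notation the (relaxed) alternative polyhedron consists of the vectors $\mu \ge 0$ with $\mu^\top M = 0$ and $\mu^\top q = -1$ (resp.\ $\le -1$), i.e.\ the normalised Farkas certificates of infeasibility of $My \le q$. The one standard ingredient I would use repeatedly is the Farkas lemma in the form: for any $I \subseteq \ints{m+1}$, the subsystem $\{M_I y \le q_I\}$ is infeasible if and only if there is a $\mu \ge 0$ with $\supp(\mu) \subseteq I$, $\mu^\top M = 0$ and $\mu^\top q < 0$; after rescaling we may take $\mu^\top q = -1$, i.e.\ $\mu \in P(x^*,\eta^*)$. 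I would prove the statement for $P(x^*,\eta^*)$: since $P(x^*,\eta^*)\subseteq \{\mu \ge 0\}$ is line-free its vertices are genuine (and $0 \notin P(x^*,\eta^*)$, so every vertex is nonzero), and by \cref{thm:alt_poly_relaxed_original} the vertices of $P^\le(x^*,\eta^*)$ and $P(x^*,\eta^*)$ coincide, so the relaxed case follows at once.

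\emph{Vertex $\Rightarrow$ minimal infeasible subsystem.} Let $v$ be a vertex of $P(x^*,\eta^*)$ and put $I := \supp(v)$. Restricting $v$ to $I$ gives a Farkas certificate, so $\{M_I y \le q_I\}$ is infeasible. For minimality, suppose some $J \subsetneq I$ already yields an infeasible subsystem, and let $\nu \in P(x^*,\eta^*)$ be a corresponding certificate with $\supp(\nu) \subseteq J$; then $\nu \ne v$. Consider $v_t := (1+t) v - t\nu$. One checks $v_t^\top M = 0$ and $v_t^\top q = -1$ for every $t$, while nonnegativity holds for all sufficiently small $|t|$: for $i \notin I$ both $v_i$ and $\nu_i$ vanish, for $i \in I\setminus J$ we have $\nu_i = 0$ so $(v_t)_i = (1+t)v_i$, and for $i \in J$ we have $v_i > 0$ so $(v_t)_i \to v_i > 0$. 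Hence $v_t \in P(x^*,\eta^*)$ on a neighbourhood of $t = 0$, and $v = \tfrac12 v_t + \tfrac12 v_{-t}$ exhibits $v$ as a proper convex combination of two distinct points of $P(x^*,\eta^*)$, contradicting that $v$ is a vertex. Thus $I$ is a minimal infeasible subsystem.

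\emph{Minimal infeasible subsystem $\Rightarrow$ vertex.} Let $I$ be a minimal infeasible subsystem. Infeasibility of $\{M_I y \le q_I\}$ together with Farkas gives some $\mu \in P(x^*,\eta^*)$ with $\supp(\mu)\subseteq I$; were this containment strict, restricting $\mu$ to its support would certify infeasibility of a proper subsystem, contradicting minimality, so $\supp(\mu) = I$ and in particular $F := \{\mu' \in P(x^*,\eta^*) : \mu'_i = 0 \text{ for all } i \notin I\}$ is nonempty. As an intersection of faces of $P(x^*,\eta^*)$ (each $\mu'_i \ge 0$ being a valid inequality), $F$ is a face; being contained in the nonnegative orthant it is line-free, hence has a vertex $v$, which is then also a vertex of $P(x^*,\eta^*)$. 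By construction $\supp(v) \subseteq I$ and $v \ne 0$, so the same minimality argument forces $\supp(v) = I$. This supplies the desired vertex, and the relaxed case follows as noted above.

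\emph{Main obstacle.} The delicate point is the minimality half of the first direction: converting ``the support of a certificate can be shrunk'' into an explicit violation of the vertex property. The perturbation $v_t = (1+t)v - t\nu$ is precisely engineered so that the extra freedom opened up by a smaller-support certificate $\nu$ lets one move off $v$ within $P(x^*,\eta^*)$ in two opposite directions; everything else reduces to routine applications of Farkas' lemma and elementary facts about faces of polyhedra.
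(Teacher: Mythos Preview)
The paper does not supply its own proof of this theorem; it is quoted from \textcite{Gleeson:1990} and stated without argument. Your proof is correct and is essentially the classical Gleeson--Parker argument: Farkas certificates correspond to points of the alternative polyhedron, minimality of the support is equivalent to extremality via a line-perturbation, and for the converse one passes to the face cut out by the vanishing coordinates and picks a vertex there.

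Two small remarks. First, \cref{thm:alt_poly_relaxed_original} as stated in the paper only asserts that every vertex of $P^{\le}(x^*,\eta^*)$ is a vertex of $P(x^*,\eta^*)$, not the reverse; your claim that the vertex sets coincide is nonetheless true, since any vertex $v$ of $P(x^*,\eta^*)$ lies in $P^{\le}(x^*,\eta^*)$, and a convex decomposition $v=\tfrac12(w_1+w_2)$ with $w_i\in P^{\le}(x^*,\eta^*)$ forces $w_i^\top q=-1$ (the average of two numbers $\le -1$ being $-1$) and hence $w_i\in P(x^*,\eta^*)$. You may want to spell this out. Second, in the perturbation step the nonnegativity for $i\in I\setminus J$ reads $(v_t)_i=(1+t)v_i$, which needs $t>-1$; you implicitly restrict to small $|t|$, so this is fine, but it is worth being explicit that both the $J$-coordinates and the $I\setminus J$-coordinates impose bounds on $|t|$.
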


This immediately provides a characterization of cut normals which satisfy \MIS in terms of the alternative polyhedron, which is also used in \cite{Fischetti:2010}. Using \cref{lem:P_alt_reverse_polar}, we can furthermore transfer one direction of the characterization to the reverse polar set:

\begin{corollary}\label{cor:alt_poly_MIS}
	Let $\subobj$ be defined as in \cref{eq:gen_eta} and $(x^*,\eta^*) \in \reals^n \times \reals$. The vector $(\pi,\pi_0)$ satisfies the \MIS criterion if and only if there is an extremal point $(\gamma, \gamma_0)$ of $P(x^*,\eta^*)$ such that $(\pi,\pi_0)=(H^\top \gamma,-\gamma_0)$.
	Furthermore, if $(\pi,\pi_0)$ is a vertex of $(\epi(\subobj)-(x^*,\eta^*))^-$, then it satisfies the \MIS criterion.
\end{corollary}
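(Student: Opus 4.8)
The plan is to treat the stated equivalence as essentially a re‑reading of the theorem of \textcite{Gleeson:1990} quoted just above (which matches supports of vertices of the alternative polyhedron with minimal infeasible subsystems of \cref{eq:gen_subprob_feas}), and to obtain the \enquote{furthermore} part by transporting a vertex of the reverse polar set back through the linear map of \cref{lem:P_alt_reverse_polar} to a vertex of $P^\leq(x^*,\eta^*)$, which is then a vertex of $P(x^*,\eta^*)$ by \cref{thm:alt_poly_relaxed_original}, so that the equivalence applies.

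For the equivalence, the direction \enquote{$\Leftarrow$} is immediate: if $(\gamma,\gamma_0)$ is an extremal point of $P(x^*,\eta^*)$, then by \textcite{Gleeson:1990} the inequalities indexed by its nonzero entries form a minimal infeasible subsystem of \cref{eq:gen_subprob_feas}, so this very $(\gamma,\gamma_0)$ witnesses that $(\pi,\pi_0):=(H^\top\gamma,-\gamma_0)$ satisfies the \MIS criterion. For \enquote{$\Rightarrow$}, I would take a witness $(\gamma,\gamma_0)$ from the definition, note that its support $I$ is a minimal infeasible subsystem, and invoke \textcite{Gleeson:1990} again to obtain an extremal point of $P(x^*,\eta^*)$ supported on $I$. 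The point to argue with some care is that $(\gamma,\gamma_0)$, normalized to lie in $P(x^*,\eta^*)$, is itself that extremal point: minimality of $I$ forces every Farkas certificate supported in $I$ to have full support $I$, so the cone of such certificates is a single ray, and the normalized point on it is extremal in $P(x^*,\eta^*)$. Hence $(\pi,\pi_0)=(H^\top\gamma,-\gamma_0)$ with $(\gamma,\gamma_0)$ an extremal point of $P(x^*,\eta^*)$.

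For the \enquote{furthermore}, let $(\pi,\pi_0)$ be a vertex of $(\epi(\subobj)-(x^*,\eta^*))^-$. By \cref{lem:P_alt_reverse_polar} this set equals $M\cdot P^\leq(x^*,\eta^*)$ with $M=\begin{psmallmatrix}H^\top&0\\0&-1\end{psmallmatrix}$. Since $\{(\pi,\pi_0)\}$ is a face of $M\cdot P^\leq(x^*,\eta^*)$, it is the unique maximizer over that set of some linear functional $c^\top(\cdot)$; then for $(\gamma,\gamma_0)\in P^\leq(x^*,\eta^*)$ one has $(M^\top c)^\top(\gamma,\gamma_0)=c^\top M(\gamma,\gamma_0)\le c^\top(\pi,\pi_0)$ with equality exactly when $M(\gamma,\gamma_0)=(\pi,\pi_0)$, so $\Set*{(\gamma,\gamma_0)\in P^\leq(x^*,\eta^*) \given M(\gamma,\gamma_0)=(\pi,\pi_0)}$ is a nonempty face of $P^\leq(x^*,\eta^*)$. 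Because $P^\leq(x^*,\eta^*)$ sits in the nonnegative orthant it is pointed, so this face contains a vertex $(\gamma,\gamma_0)$ of $P^\leq(x^*,\eta^*)$ with $(H^\top\gamma,-\gamma_0)=(\pi,\pi_0)$. By \cref{thm:alt_poly_relaxed_original} this $(\gamma,\gamma_0)$ is also a vertex of $P(x^*,\eta^*)$, and the equivalence just proved shows $(\pi,\pi_0)$ satisfies the \MIS criterion.

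The hard parts are the two transport steps. For the equivalence it is the uniqueness up to positive scaling of the Farkas certificate of a minimal infeasible subsystem, which guarantees that a witness from the definition really is a vertex of $P(x^*,\eta^*)$ rather than some interior point of its ray; for the \enquote{furthermore} it is the standard but slightly fiddly fact that a vertex of a linear image of a polyhedron is the image of a vertex, for which identifying the preimage as a face and using pointedness of $P^\leq(x^*,\eta^*)$ are the key ingredients. I would also point out that only one implication can hold here: by the continuation of \cref{ex:alt_poly_rev_polar}, an extremal point of $P(x^*,\eta^*)$ --- hence a \MIS-cut --- may be sent into the relative interior of the reverse polar set, so the converse \enquote{\MIS $\Rightarrow$ vertex of $(\epi(\subobj)-(x^*,\eta^*))^-$} is false.
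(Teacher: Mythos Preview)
The paper states this corollary without proof, treating it as an immediate consequence of Gleeson's theorem (for the equivalence) together with \cref{lem:P_alt_reverse_polar} and \cref{thm:alt_poly_relaxed_original} (for the ``furthermore'' part). Your proposal follows exactly this route and correctly supplies the details the paper omits: the uniqueness up to positive scaling of a Farkas certificate for a minimal infeasible subsystem, and the standard argument that a vertex of a linear image of a pointed polyhedron lifts to a vertex of the preimage, which you carry out cleanly via an exposing functional and pointedness of $P^\leq(x^*,\eta^*)$.

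One small wrinkle in your ``$\Rightarrow$'' direction: after you normalize the witness $(\gamma,\gamma_0)$ to lie in $P(x^*,\eta^*)$, the vector $(H^\top\gamma,-\gamma_0)$ is only a \emph{positive multiple} of the original $(\pi,\pi_0)$, not $(\pi,\pi_0)$ itself, so your concluding ``Hence $(\pi,\pi_0)=(H^\top\gamma,-\gamma_0)$'' does not literally follow. This is really a feature of the corollary as stated rather than of your argument---cut normals are tacitly identified up to positive scaling throughout the paper (cf.\ the phrasing in \cref{thm:alt_poly_all_cuts})---so the reasoning is fine in spirit; if you want literal equality you should phrase the conclusion as ``up to a positive scalar''.
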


Note that the reverse direction of the last sentence is generally not true, i.e. there might be minimal infeasible subsystems that do not correspond to vertices of the reverse polar set (see, \eg, \cref{ex:alt_poly_rev_polar}).

\subsection{Facet-defining Cuts}
\label{sec:gen_facet}

In cutting plane algorithms for polyhedra, facet-defining cuts are commonly considered to be
very useful since they form the smallest family of inequalities which completely describe the target polyhedron. A cutting-plane algorithm that can separate (distinct) facet inequalities in each iteration is not necessarily computationally efficient, but at least it is automatically guaranteed to terminate after a finite number of iterations. Also in practical applications, facet cuts have turned out to be extremely useful, e.g. in the context of branch-and-cut algorithms for integer programs such as the Traveling Salesman Problem. This is why the description of facet-defining inequalities has been a large and very active area of research for decades (see \cites{Balas:1975}{Nemhauser:1988}{Cook:1998}{Korte:2008} and, as mentioned before, \cite{Conforti:2018}).

Remember that a halfspace $\halfspace{\pi}{\alpha}$ is facet-defining for a set $C$ if $C \subseteq \halfspace{\pi}{\alpha}$ and $\hplane{\pi}{\alpha} \cap C$ contains $\dim(C)$
many affinely independent points.
Analogously to the \MIS criterion above, we define the \facet criterion for a normal vector in the context of Benders decomposition as follows:

\begin{definition}
	Let $\subobj$ be defined as in \cref{eq:gen_eta} and $(\pi,\pi_0) \in \reals^n \times \reals \setminus \Set{0}$. We say that $(\pi,\pi_0)$ \emph{satisfies the \facet criterion} if there exists $\alpha \in \reals$ such that $\halfspace{(\pi,\pi_0)}{\alpha}$ is either facet-defining for $\epi(\subobj)$ or the corresponding hyperplane $\hplane{(\pi,\pi_0)}{\alpha}$ contains $\epi(\subobj)$.
\end{definition}

Note that, in deviation from the common definition of a facet-defining cut, the above definition requires that the halfspace supports \emph{at least} $\dim(C)$ affinely independent points. In other words, in the case where $\epi(\subobj)$ is not full-dimensional, we also allow that $\epi(\subobj)$ is entirely contained in the hyperplane which represents the boundary of $\halfspace{(\pi,\pi_0)}{\alpha}$. In this situation, the comparison of different cut normals is inherently difficult: Since there is no clear way to tell if a cut supporting a facet of $\epi(\subobj)$ or one fully containing the set is the stronger cut, the \facet criterion captures arguably the strongest statement about a cut in relation to $\epi(\subobj)$ that we can make in general: In no case would we want to select a cut that supports \emph{neither} a facet \emph{nor} fully contains the set $\epi(\subobj)$.

The following result was originally obtained by \textcite[Theorem 4.5]{Balas:1998} in his analysis of disjunctive cuts. It reappears in \textcite[Theorem 6.2]{Cornuejols:2006}, using more familiar notation, but the latter contains a minor error in the case where the set $P$ is subdimensional. We therefore re-state a corrected version of the important parts of \cite[Theorem 6.2]{Cornuejols:2006} below, a corresponding proof can be found in \cite[Theorem 3.30]{Stursberg:2019}.

\begin{theorem}\label{thm:rev_polar_facets}
	Let $P \subseteq \reals^n$ be a polyhedron, $x^* \notin P$ and
	\begin{equation}
		r := \begin{cases}
				\dim(P)-1,	& x^* \in \aff(P)\\
				\dim(P),	& x^* \notin \aff(P).
			\end{cases}
	\end{equation}

Then, there exists an $x^*$-separating halfspace with normal vector $\pi \neq 0$ supporting an $r$-dimensional face of $P$ if and only if there exists a vertex $\pi^*$ of $\lin(P-x^*) \cap (P-x^*)^-$ and some $\lambda > 0$ such that $\lambda \pi \in \pi^* + \lin(P-x^*)^\bot$.
\end{theorem}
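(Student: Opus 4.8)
The plan is to reduce the general (possibly subdimensional) case to the full-dimensional one by working inside the affine hull of $P$. First I would set $L := \lin(P - x^*)$ and note that $(P - x^*)^\bot = L^\bot$ plays the role of the "lineality" directions that are irrelevant to separation: any $x^*$-separating halfspace for $P$ can be perturbed by adding components in $L^\bot$ to its normal vector without changing which points of $P$ lie on the boundary hyperplane, and conversely two normals that differ by an element of $L^\bot$ cut out the same face of $P$. This is why the conclusion is phrased modulo $L^\bot$ and up to positive scaling. So the real content is the statement about $L \cap (P - x^*)^-$, which lives inside the subspace $L$ where $P - x^*$ has nonempty interior (relative to $\aff(P - x^*)$, but after translating by $x^*$ and restricting to $L$ this becomes genuine full-dimensionality).

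The key steps, in order: (1) Translate so that $x^*$ is the origin and restrict everything to $L$; verify that $P \cap L$ (or rather the image of $P$ under projection onto $L$) is full-dimensional in $L$ and that separating $0$ from $P$ is equivalent to separating $0$ from this restricted set, with $r$ measuring the dimension of the face supported. (2) In the full-dimensional setting, invoke the classical fact (essentially Balas, Theorem 4.5 in \cite{Balas:1998}) that the vertices of $(P - x^*)^-$ are in bijection with the facet-defining normals of $P$: a normal $\pi$ supports a facet of $P$ (an $r$-dimensional face, with $r = \dim P - 1$ when $0 \in \aff(P)$, or $r = \dim P$ when $0 \notin \aff(P)$ and the "facet" is cut by a hyperplane through $0$ missing $P$) if and only if, after scaling so that $\min_{x \in P}\pi^\top x = -1$, the point $\pi$ is a vertex of $(P - x^*)^-$. (3) Reassemble: a normal $\pi$ for the original problem supports an $r$-dimensional face iff its $L$-component $\pi^*$ (the orthogonal projection onto $L$) is, after positive scaling, a vertex of $L \cap (P - x^*)^-$, which is exactly the condition $\lambda \pi \in \pi^* + L^\bot$ for some $\lambda > 0$ and some vertex $\pi^*$ of $L \cap (P-x^*)^-$.

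The main obstacle is getting the dimension bookkeeping exactly right in the subdimensional case — this is precisely where \textcite{Cornuejols:2006} made their error. Specifically, one must be careful that $\lin(P - x^*) \cap (P - x^*)^-$ has the right vertices: the reverse polar $(P-x^*)^-$ is itself full-dimensional in $\reals^n$ (it contains translates in the $L^\bot$ directions), so its own vertices would not capture facets correctly; intersecting with $L$ is what "rigidifies" it. I would verify carefully that $\pi^*$ is a vertex of $L \cap (P-x^*)^-$ iff $\pi^* + L^\bot$ is a minimal (inclusion-wise) face of $(P - x^*)^-$, and that such minimal faces correspond bijectively to the $r$-dimensional faces of $P$ via the support relation. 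The finiteness of $\min_{x \in P} \pi^\top x$ (needed to normalize to $-1$) follows because $\pi \in \pos(P - x^*)^\circ$ for any separating normal, so $\pi^\top$ is bounded below on $P$ exactly when it does not vanish on a recession direction — and one checks the normalization is well-defined for the $\pi$ of interest. For the detailed verification I refer to \cite[Theorem 3.30]{Stursberg:2019}.
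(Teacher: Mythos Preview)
The paper does not give a proof of this theorem at all; it only states the result and refers the reader to \cite[Theorem 3.30]{Stursberg:2019} for a proof, which is exactly the reference you also invoke at the end. Your outline (reduce to the linear span $L=\lin(P-x^*)$, observe that normals can be modified freely in $L^\bot$ without changing the supported face, then apply the full-dimensional vertex--facet correspondence of Balas inside $L$) is the natural route and is consistent with what the cited source does, so there is nothing to compare against and no gap to flag.
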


Most notably, for the case where $P$ is full-dimensional (\ie, $\dim(P)=n$) the above theorem implies that there exists an $x^*$-separating halfspace with normal vector $\pi$ supporting a facet of $P$ if and only if there exists a vertex $\pi^*$ of $(P-x^*)^-$ and $\lambda \geq 0$ such that $\lambda \pi = \pi^*$.

In this case, every cut generated from a vertex of the reverse polar set defines a facet of $\epi(\subobj)$. If an explicit $\CH$-representation of the reverse polar set is available, we can thus easily obtain a facet-defining cut, \eg, by linear programming.

Note that since $P^\leq(x^*,\eta^*)$ is line-free, \cref{lem:P_alt_reverse_polar} implies that for every vertex of the reverse polar set there exists a vertex of the relaxed alternative polyhedron (and hence of the original alternative polyhedron) that leads to the same cut normal. In other words, if the normal of an $x^*$-separating halfspace satisfies the \facet criterion, then it also satisfies the \MIS criterion.

On the other hand, \cref{lem:P_alt_reverse_polar} is not sufficient to guarantee that selecting a vertex of the alternative polyhedron yields a facet-defining cut: As \cref{ex:alt_poly_rev_polar} shows, a vertex of $P^\leq(x^*,\eta^*)$, is not necessarily mapped to a vertex of the reverse polar set under the transformation from \cref{lem:P_alt_reverse_polar}.
This exposes a useful hierarchy of subsets of the alternative polyhedron according to the properties of the cut normals which they yield: It is easy to select a vertex of the alternative polyhedron, which guarantees that the resulting cut normal satisfies the \MIS criterion, while
the points that lead to cut normals satisfying the \facet criterion constitute a subset of these vertices.
The approach of selecting \MIS-cuts may thus be viewed as a heuristic method to find \facet-cuts.

Although cuts satisfying the \MIS criterion in general do not satisfy the \facet criterion, we can obtain some information on when this is the case in the situation of \cref{thm:dual_problem_equiv}, \ie\ if the objective function $(\tilde\omega,\tilde\omega_0)$ used to select the cut via problem \cref{eq:dual_problem_equiv_alt} satisfies $(\tilde\omega,\tilde\omega_0) = (H\omega,-\omega_0)$ for some valid objective $(\omega,\omega_0)$ for problem \cref{eq:dual_problem_equiv_rev}.

In this case it turns out that we actually obtain a \facet-cut for all objectives $(\omega,\omega_0)$ except those from a lower-dimensional subspace. More precisely, we can prove the following characterization of the relationship between extremal points of the alternative polyhedron and cut normals satisfying the \facet criterion. This characterization is similar to
\cite[Proposition 6]{Conforti:2018}:

\begin{theorem}\label{thm:gen_vertex_equivalence}
	Let $\subobj$ be defined as in \cref{eq:gen_eta}, $(x^*, \eta^*) \in \reals^n \times \reals \setminus \epi(\subobj)$,
	and $(\omega,\omega_0) \in \cl(\pos(\epi(\subobj) - (x^*,\eta^*)))$.
  Then, there exists an optimal vertex $(\gamma^*,\gamma_0^*) \in P^\leq(x^*,\eta^*)$ with respect to the objective function $(H\omega,-\omega_0)$
  such that the resulting cut normal $(H^\top\gamma^*,-\gamma_0^*)$ is $(x^*,\eta^*)$-separating and satisfies the \facet criterion.
\end{theorem}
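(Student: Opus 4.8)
The plan is to reduce the statement to \cref{thm:rev_polar_facets} via the correspondence between the relaxed alternative polyhedron and the reverse polar set established in \cref{lem:P_alt_reverse_polar}, and then to transfer a facet-supporting normal back to an optimal vertex of $P^\leq(x^*,\eta^*)$. Write $Q := \epi(\subobj) - (x^*,\eta^*)$, so that $0 \notin Q$ (because $(x^*,\eta^*) \notin \epi(\subobj)$) and $Q^- = M \cdot P^\leq(x^*,\eta^*)$ with $M := \begin{psmallmatrix} H^\top & 0 \\ 0 & -1 \end{psmallmatrix}$.

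First I would invoke \cref{thm:rev_polar_facets} with $P := Q$ and $x^* := 0$: since $0 \notin Q$, there is a separating halfspace with a normal vector $\pi \neq 0$ supporting an $r$-dimensional face of $Q$, where $r = \dim(Q)-1$ if $0 \in \aff(Q)$ and $r = \dim(Q)$ otherwise — this is precisely the dichotomy in the \facet criterion (a facet of $\epi(\subobj)$, or a hyperplane containing it). Concretely, the theorem gives a vertex $(\pi^*,\pi^*_0)$ of $\lin(Q) \cap Q^-$ that, modulo $\lin(Q)^\bot$ and positive scaling, yields such a normal. Next I would use \cref{thm:cornuejols_reverse_gauge}: because $(\omega,\omega_0) \in \cl(\pos Q)$, the maximum $z^*$ of $\omega^\top\pi + \omega_0\pi_0$ over $Q^-$ is finite ($\leq 0$); combined with the fact that $Q^-$ has a vertex structure inherited from $P^\leq(x^*,\eta^*)$ through \cref{lem:P_alt_reverse_polar} (which is line-free, hence has vertices), a linear optimum over $Q^-$ is attained at a vertex. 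The key geometric point is then that among the optimal face of $Q^-$ for the objective $(\omega,\omega_0)$, we can choose a vertex of $Q^-$ that also lies in $\lin(Q)$ up to a $\lin(Q)^\bot$-translate — i.e. that the vertex $(\pi^*,\pi^*_0)$ of $\lin(Q)\cap Q^-$ guaranteed by \cref{thm:rev_polar_facets} can be taken to be optimal for $(\omega,\omega_0)$; this works because $(\omega,\omega_0) \in \cl(\pos Q)$ is orthogonal to no direction that would break optimality on the relevant face, and the translate by $\lin(Q)^\bot$ does not change the objective value since $\omega \in \lin(Q)$ after the reduction (one has to argue $(\omega,\omega_0)$ can be assumed in $\lin(Q)$, replacing it by its orthogonal projection, which does not alter the maximum over $Q^- \subseteq \aff$-directions of $Q$).

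Having fixed such a vertex $(\pi^*,\pi^*_0)$ of $Q^-$ that is simultaneously $(x^*,\eta^*)$-separating, facet-supporting (in the \facet-criterion sense), and optimal for $(\omega,\omega_0)$, I would pull it back through $M$: since $P^\leq(x^*,\eta^*)$ is line-free and $M \cdot P^\leq(x^*,\eta^*) = Q^-$, every vertex of $Q^-$ is the image of a vertex of $P^\leq(x^*,\eta^*)$ (this is exactly the observation already used in the excerpt to deduce "\facet implies \MIS"). So there is a vertex $(\gamma^*,\gamma^*_0)$ of $P^\leq(x^*,\eta^*)$ with $M(\gamma^*,\gamma^*_0) = (H^\top\gamma^*, -\gamma^*_0) = (\pi^*,\pi^*_0)$. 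By \cref{thm:dual_problem_equiv} (applied with objective $(\tilde\omega,\tilde\omega_0) = (H\omega,-\omega_0)$), optimality of $(\pi^*,\pi^*_0)$ for the reverse-polar problem \cref{eq:dual_problem_equiv_rev} is equivalent to optimality of $(\gamma^*,-\pi^*_0) = (\gamma^*,\gamma^*_0)$ for \cref{eq:dual_problem_equiv_alt} over $P^\leq(x^*,\eta^*)$; and since every vertex of $P^\leq$ is a vertex of $P$ by \cref{thm:alt_poly_relaxed_original}, $(\gamma^*,\gamma^*_0)$ is the desired optimal vertex in $P(x^*,\eta^*)$ (finally, optimality at a vertex rather than merely on the optimal face follows because $P^\leq$ is line-free and the objective is bounded above on it). The cut normal is then $(H^\top\gamma^*, -\gamma^*_0) = (\pi^*,\pi^*_0)$, which is $(x^*,\eta^*)$-separating and satisfies the \facet criterion by construction.

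The main obstacle I anticipate is the bookkeeping around the non-full-dimensional case: reconciling the $\lin(Q)$ / $\lin(Q)^\bot$ decomposition in \cref{thm:rev_polar_facets} with the objective $(\omega,\omega_0)$, and making sure that the vertex of $\lin(Q)\cap Q^-$ it produces can be chosen optimal for $(\omega,\omega_0)$ over all of $Q^-$ (not just over the slice). The clean way to handle this is to first replace $(\omega,\omega_0)$ by its orthogonal projection onto $\lin(Q)$ — which leaves the maximum over $Q^-$ unchanged because $Q^-$ differs from $\lin(Q)\cap Q^-$ only by directions in $\lin(Q)^\bot$, on which the projected objective vanishes — and then run the vertex-selection argument entirely inside $\lin(Q)$, where $\lin(Q)\cap Q^-$ is full-dimensional (in $\lin(Q)$) so \cref{thm:rev_polar_facets} reduces to its full-dimensional form. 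Everything else is a routine transfer through \cref{lem:P_alt_reverse_polar}, \cref{thm:dual_problem_equiv}, and \cref{thm:alt_poly_relaxed_original}.
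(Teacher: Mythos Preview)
Your overall strategy---boundedness from \cref{thm:cornuejols_reverse_gauge}, locating a facet-supporting normal via \cref{thm:rev_polar_facets}, and transferring through \cref{lem:P_alt_reverse_polar} and \cref{thm:dual_problem_equiv}---is the same as the paper's. The gap is in the pull-back step, and it is not just bookkeeping.

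When $\epi(\subobj)$ is not full-dimensional, the lineality space of $Q^-$ is exactly $L^\bot$ with $L := \lin(Q)$, so $Q^-$ has \emph{no} vertices. Your sentence ``every vertex of $Q^-$ is the image of a vertex of $P^\leq(x^*,\eta^*)$'' is then vacuous, and after your fix (working inside $L\cap Q^-$) you have a vertex $(\pi^*,\pi^*_0)$ of $L\cap Q^-$, not of $Q^-$. You then assert the existence of a vertex $(\gamma^*,\gamma^*_0)$ of $P^\leq$ with $M(\gamma^*,\gamma^*_0)=(\pi^*,\pi^*_0)$ \emph{exactly}. That need not hold: the fibre $M^{-1}(\pi^*,\pi^*_0)\cap P^\leq$ can be a single non-extremal point of $P^\leq$, with the two directions witnessing non-extremality mapping into $L^\bot$. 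So the transfer you call ``routine'' is precisely where the work is.

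The paper fixes this by relaxing the target: it takes $(\gamma^*,\gamma^*_0)$ to be extremal in $P^*\cap\{(\gamma,\gamma_0): M(\gamma,\gamma_0)-(\pi,\pi_0)\in L^\bot\}$, i.e.\ the preimage of the whole coset $(\pi,\pi_0)+L^\bot$ restricted to the optimal face $P^*$, and then \emph{proves} that this point is extremal in $P^*$ (hence in $P^\leq$). The argument is: if $(\gamma^*,\gamma^*_0)$ were a proper midpoint of $(\gamma^1,\gamma^1_0),(\gamma^2,\gamma^2_0)\in P^*$, project their $M$-images onto $L$; these land in $L\cap Q^-$ and average to $(\pi,\pi_0)$, so by extremality of $(\pi,\pi_0)$ in $L\cap Q^-$ both projections equal $(\pi,\pi_0)$, forcing $(\gamma^i,\gamma^i_0)$ back into the constrained set where $(\gamma^*,\gamma^*_0)$ was chosen extremal. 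The resulting cut normal $M(\gamma^*,\gamma^*_0)$ lies in $(\pi,\pi_0)+L^\bot$ rather than equalling $(\pi,\pi_0)$---but \cref{thm:rev_polar_facets} says that is exactly enough for the \facet criterion. Your outline needs this extremality step; without it the conclusion does not follow in the subdimensional case. (A minor point: the statement asks for a vertex of $P^\leq$, not of $P$, so the appeal to \cref{thm:alt_poly_relaxed_original} at the end is unnecessary, and its hypothesis of a strictly negative optimum can fail on the boundary of $\cl(\pos Q)$.)
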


\begin{proof}
	Let $L := \lin(\epi(\subobj)-(x^*,\eta^*))$ and observe that $L$ is orthogonal to the lineality space of $(\epi(\subobj) -(x^*,\eta^*))^-$.
	From \cref{thm:cornuejols_reverse_gauge}, the reverse polar set $(\epi(\subobj) -(x^*,\eta^*))^-$ is bounded in the direction of $(\omega,\omega_0)$. We may therefore choose an optimal solution $(\pi,\pi_0)$ from the intersection $(\epi(\subobj) -(x^*,\eta^*))^- \cap L$. While the reverse polar need not be line-free, note that $(\epi(\subobj) -(x^*,\eta^*))^- \cap L$ is indeed line-free and we can therefore choose $(\pi,\pi_0)$ to be extremal in $(\epi(\subobj) -(x^*,\eta^*))^- \cap L$. By \cref{thm:dual_problem_equiv}, there exists $\gamma'$ with $H^\top \gamma' = \pi$ such that $(\gamma',-\pi_0)$ is an optimal solution to the problem
	\begin{equation}
		\max \Set*{(H \omega)^\top \gamma - \omega_0 \gamma_0 \given (\gamma, \gamma_0) \in P^\leq(x^*,\eta^*)}.\label{eq:gen_vertex_equivalence}
	\end{equation}
	Denote by $P^*$ the face of optimal solutions of \cref{eq:gen_vertex_equivalence} and observe that
	\begin{align*}
		(\gamma',-\pi_0) &\in P^* \cap \Set{(\gamma,\gamma_0) \given (H^\top \gamma, -\gamma_0) - (\pi,\pi_0) = 0}\\ &\subseteq P^* \cap \Set{(\gamma,\gamma_0) \given (H^\top \gamma, -\gamma_0) - (\pi,\pi_0) \in L^\bot}.
	\end{align*}
	Let $(\gamma^*,\gamma^*_0)$ be an extremal point of $P^* \cap \Set{(\gamma,\gamma_0) \given (H^\top \gamma, -\gamma_0) - (\pi,\pi_0) \in L^\bot}$ (which exists, since $P^\leq(x^*,\eta^*)$ is line-free). Then $(\gamma^*,\gamma^*_0)$ is obviously optimal for \cref{eq:gen_vertex_equivalence} and furthermore $(H^\top\gamma^*,-\gamma_0^*) = (\pi,\pi_0) + v$ with $v \in L^\bot$, which means by \cref{thm:rev_polar_facets} that it satisfies the \facet criterion.

  It remains to show that $(\gamma^*,\gamma^*_0)$ is a vertex of $P^*$, thus showing that it is also a vertex of $P^\leq(x^*,\eta^*)$. To see this,
  let $(\gamma^1,\gamma^1_0), (\gamma^2,\gamma^2_0) \in P^*$ such that $(\gamma^*,\gamma^*_0) \in \relint([(\gamma^1,\gamma^1_0),(\gamma^2,\gamma^2_0)])$. However, it follows that
  \[(\pi,\pi_0) + v = (H^\top\gamma^*,-\gamma^*_0) \in \relint([(H^\top\gamma^1,-\gamma^1_0),(H^\top\gamma^2,-\gamma^2_0)])\] and by \cref{lem:P_alt_reverse_polar}, $[(H^\top\gamma^1,-\gamma^1_0),(H^\top\gamma^2,-\gamma^2_0)] \subseteq (\epi(\subobj) -(x^*,\eta^*))^-$. As $(\pi,\pi_0)$ is extremal in the set $(\epi(\subobj) -(x^*,\eta^*))^- \cap L$, this implies that both $(H^\top\gamma^1,-\gamma^1_0),(H^\top\gamma^2,-\gamma^2_0) \in (\pi,\pi_0) + L^\bot$ which in turn implies that $(\gamma^1,\gamma^1_0), (\gamma^2,\gamma^2_0) \in P^* \cap \Set{(\gamma,\gamma_0) \given (H^\top \gamma, -\gamma_0) - (\pi,\pi_0) \in L^\bot}$. As $(\gamma^*,\gamma^*_0)$ is extremal in $P^* \cap \Set{(\gamma,\gamma_0) \given (H^\top \gamma, -\gamma_0) - (\pi,\pi_0) \in L^{\bot}}$, we obtain that $(\gamma^1,\gamma^1_0)=(\gamma^2,\gamma^2_0)=(\gamma^*,\gamma^*_0)$, which proves extremality of $(\gamma^*,\gamma^*_0)$ in $P^*$.
\end{proof}

In particular, the above theorem implies the following: If $(\gamma',\gamma_0') \in P^\leq(x^*,\eta^*)$ is an optimal extremal point with respect to the objective function $(H\omega,-\omega_0)$ such that the resulting cut normal $(H^\top\gamma',-\gamma_0')$ does not satisfy the \facet criterion, then the optimal solution for maximizing $(H\omega,-\omega_0)$ over $P^\leq(x^*,\eta^*)$ is not unique. Furthermore, by \cref{thm:dual_problem_equiv}, this implies that the same is true for maximizing the objective $(\omega, \omega_0)$ over $(\epi(\subobj) -(x^*,\eta^*))^-$.

We can summarize our results as follows:
While any \facet-cut is also an \MIS-cut, the reverse is not always true. However, if we
optimize the objective $(H\omega,-\omega_0)$ over the alternative polyhedron, then there exists only a subdimensional set of choices for the vector $(\omega,\omega_0)$ for which the resulting cut might not satisfy the \facet criterion (those, for which the optimum over the reverse polar set is non-unique).

This suggests that these cases should be \enquote{rare} in practice, especially if we choose (or perturb) $(\omega,\omega_0)$ randomly from some full-dimensional set. This argument, why a cut obtained for a generic vector $(\omega,\omega_0)$ can be expected to be facet-defining, is identical to the concept of \enquote{almost surely} finding facet-defining cuts proposed by \textcite{Conforti:2018}.

Looking back at \cref{rem:gen_alt_rep_rev_polar}, this similarity should not come as a surprise: With $(\omega, \omega_0) = (\bar x - x^*, \bar \eta - \eta^*)$ for a point $(\bar x, \bar \eta) \in \relint(\epi(\subobj))$, the resulting cut-generating LP is almost identical. In fact, the point $(\bar x, \bar \eta)$ in this case takes the role of the point that the origin is relocated into in the approach from \cite{Conforti:2018}.
Observe, however, that while \textcite{Conforti:2018} require that point to lie in the relative interior of $\epi(\subobj)$, we can actually expect a cut satisfying the \facet criterion from any $(\omega, \omega_0)$ for which the optimal objective over the reverse polar is strictly negative. By \cref{thm:cornuejols_reverse_gauge}, one sufficient (but not necessary) criterion for this is to choose $(\omega, \omega_0) = (\bar x - x^*, \bar \eta - \eta^*)$ for an arbitrary point $(\bar x, \bar \eta) \in \relint(\epi(\subobj))$.
%

\subsection{Pareto-Optimality}
\label{sec:gen_pareto}
The first systematic work on the general selection of Benders cuts to our knowledge was undertaken by \textcite{Magnanti:1981}. The paper, which has proven very influential and still being referred to regularly, focuses on the property of Pareto-optimality. It can intuitively be described as follows: A cut is Pareto-optimal if there is no other cut valid for $\epi(\subobj)$ which is \emph{clearly superior}, which \emph{dominates} the first cut.

In this setting, any cut that does not support $\epi(\subobj)$ is obviously dominated. Between supporting cuts, there is no general criterion for domination. We can, however, compare cuts where the cut normal $(\pi,\pi_0)$ satisfies $\pi_0 \neq 0$ (this is also the case covered by \cite{Magnanti:1981}):
\begin{definition}\label{def:pareto}
	For a problem of the form \cref{eq:gen_problem}, we say that an inequality $\pi^\top x +\pi_0 \eta \leq \alpha$ with $\pi_0 < 0$ is \emph{dominated} by another inequality $\pi'^\top x + \pi'_0 \eta \leq \alpha'$ if $\pi'_0 < 0$ and
	\begin{equation}
		\frac{\pi'^\top x-\alpha'}{-\pi'_0} \geq \frac{\pi^\top x-\alpha}{-\pi_0} \quad \text{for all $x \in S$,}
	\end{equation}
	with strict inequality for at least one $x \in S$.

	If $\pi_0 < 0$ and $\pi^\top x +\pi_0 \eta \leq \alpha$ is not dominated by any valid inequality for $\epi(\subobj)$, then we call it \emph{Pareto-optimal}.
\end{definition}

Remember that the set $S$ contains all points $x \in \reals^n$ that are feasible for an optimization problem of the form \cref{eq:gen_problem} if we ignore the linear constraints $Hx + Ay \leq b$. By the above definition, a cut dominates another cut if the minimum value of $\eta$ that it enforces is at least as good for all $x \in S$ and strictly better for at least one $x \in S$ (see \cref{fig:gen_cut_pareto}).

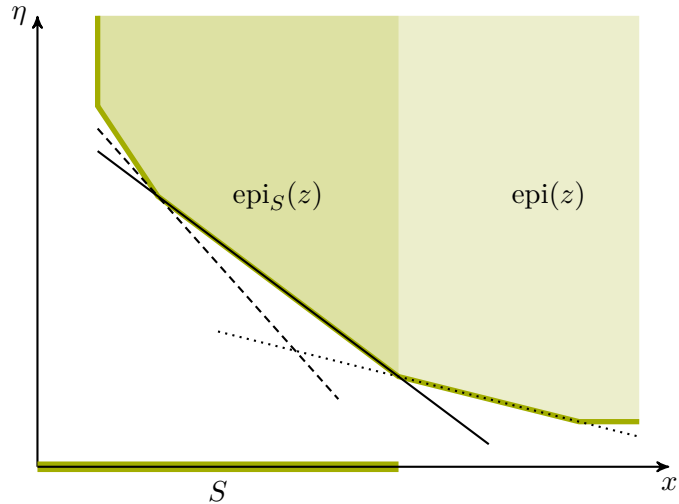
\begin{figure}
	\centering
	\begin{tikzpicture}[yscale=1.5, scale=0.8]
		\draw[thick, ->, >=stealth'] (0,0) -- (0,5) node[left] {$\eta$};
		\draw[line width=4pt, draw=accentuating green] (0,0) -- (6,0) node[below, midway] {$S$};
		\draw[thick, ->, >=stealth'] (0,0) -- (10.5,0) node[below] {$x$};

		\fill[highlight] (1,5) -- (1,4) -- (2,3) -- (4,2) -- (6,1) -- (9,0.5) -- (10,0.5) -- (10,5) -- cycle;
		\fill[highlight] (1,5) -- (1,4) -- (2,3) -- (4,2) -- (6,1) -- (6,5) -- cycle;
		\draw[line width=2pt, accentuating green] (1,5) -- (1,4) -- (2,3) -- (4,2) -- (6,1) -- (9,0.5) -- (10,0.5);

		\node at (8.5,3) {$\epi(\subobj)$};
		\node at (4,3) {$\epi_S(\subobj)$};
		\draw[thick, dotted] (3,1.5) -- (10,{0.5-1/3*0.5});
		\draw[thick] (1,3.5) -- (7.5,0.25);
		\draw[thick, densely dashed] (1,3.75) -- (5,{3.75-4*0.75});
	\end{tikzpicture}
	\caption{The dotted cut supports a facet of $\epi(\subobj)$ and it supports $\epi_S(\subobj)$, but it is still not Pareto-optimal. The solid cut supports a facet of $\epi_S(\subobj)$ and is hence Pareto-optimal. The dashed cut is Pareto-optimal even though it does not support a facet of $\epi(\subobj)$ (or $\epi_S(\subobj)$).}
	\label{fig:gen_cut_pareto}
\end{figure}

Analogously to the previous criteria, we define the \pareto criterion for a cut normal:

\begin{definition}
	For a problem of the form \cref{eq:gen_problem} with $\subobj$ as defined as in \cref{eq:gen_eta}, let $(\pi,\pi_0) \in \reals^n \times \reals$. We say that $(\pi,\pi_0)$ \emph{satisfies the \pareto criterion} if there exists a scalar $\alpha \in \reals$ such that the inequality $\pi^\top x +\pi_0 \eta \leq \alpha$ is Pareto-optimal.
\end{definition}

This criterion is very reasonable: If a cut is not Pareto-optimal, then it can be replaced by a different cut, which is also valid for $\epi(\subobj)$, but leads to a strictly tighter approximation. We would hence prefer to generate a stronger, Pareto-optimal cut right away.

The following theorem provides us with a characterization of Pareto-optimal cuts. It is based on the idea of \cite[Theorem~1]{Magnanti:1981}, which is formulated under the assumption that the subproblem is always feasible (which implies that $\pi_0 < 0$ for any cut normal $(\pi,\pi_0)$). While the original theorem is only concerned with sufficiency, we extend the result in a natural way to obtain a criterion that gives
a complete characterization of Pareto-optimal cuts. We use the following separation lemma:

\begin{lemma}[\cite{Rockafellar:1970}]\label{thm:sep_not_contained}
	Let $C \subseteq \reals^n$ be a non-empty convex set and $K \subseteq \reals^n$ a non-empty polyhedron such that $\relint(C) \cap K = \emptyset$. Then, there exists a hyperplane separating $C$ and $K$ which does not contain $C$.
\end{lemma}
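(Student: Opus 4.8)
The plan is to reduce to the case where $C$ is relatively open and then argue by induction on the dimension $n$, using the standard proper separation theorem as the engine together with a rotation step that crucially exploits the polyhedrality of $K$.

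First I would replace $C$ by $\relint C$. This set is non-empty, convex and relatively open, and it still satisfies $(\relint C)\cap K=\emptyset$ by hypothesis; moreover any hyperplane that separates $\relint C$ from $K$ and does not contain $\relint C$ automatically separates $C$ from $K$ (since $C\subseteq\cl C=\cl(\relint C)$) and does not contain $C$ (since $C\subseteq H$ would force $\relint C\subseteq H$). So we may assume $C$ is relatively open and $C\cap K=\emptyset$. Since then $\relint C\cap\relint K\subseteq C\cap K=\emptyset$, the proper separation theorem \cite[Theorem~11.3]{Rockafellar:1970} provides a hyperplane $H=\{x:\pi^\top x=\alpha\}$ with $\pi\neq0$, $C\subseteq\{x:\pi^\top x\leq\alpha\}$, $K\subseteq\{x:\pi^\top x\geq\alpha\}$, and not both of $C,K$ contained in $H$. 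If $C\not\subseteq H$ we are done, so assume $C\subseteq H$ and hence $K\not\subseteq H$. The linear function $\pi^\top\!\cdot\,$ is bounded below on the polyhedron $K$, so it attains its minimum there; if that minimum exceeds $\alpha$, any $\alpha'$ with $\alpha<\alpha'<\min_{x\in K}\pi^\top x$ yields a separating hyperplane $\{x:\pi^\top x=\alpha'\}$ not containing $C$, and we are done. Otherwise $F:=K\cap H=\{x\in K:\pi^\top x=\alpha\}$ is a non-empty proper face of $K$.

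Now view everything inside the affine space $H$ of dimension $n-1$: there $C$ is convex, $F$ is a polyhedron, and $\relint C\cap F=C\cap F\subseteq C\cap K=\emptyset$, so by the induction hypothesis there is a hyperplane of $H$, which I write as $H\cap\{x:\rho^\top x=\beta\}$ with $\rho\perp\pi$ (possible after projecting the normal onto $\pi^\perp$), such that $C\subseteq\{x:\rho^\top x\leq\beta\}$, $F\subseteq\{x:\rho^\top x\geq\beta\}$, and $C\not\subseteq\{x:\rho^\top x=\beta\}$. It remains to rotate: for $t>0$ put $\pi_t:=\pi+t\rho$ (non-zero, as $\rho\perp\pi\neq0$) and $\alpha_t:=\alpha+t\beta$. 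Every $c\in C$ satisfies $\pi^\top c=\alpha$ and $\rho^\top c\leq\beta$, with strict inequality for at least one $c$, so $C\subseteq\{x:\pi_t^\top x\leq\alpha_t\}$ and $C\not\subseteq\{x:\pi_t^\top x=\alpha_t\}$ for every $t>0$. The point is to find $t>0$ with $\pi_t^\top x\geq\alpha_t$ on all of $K$, i.e. with $\ell+t\,m\geq0$ on $K$, where $\ell(x):=\pi^\top x-\alpha$ is $\geq0$ on $K$ and vanishes exactly on $F$, and $m(x):=\rho^\top x-\beta$ is $\geq0$ on $F$.

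Here I would invoke the Minkowski--Weyl decomposition $K=\conv(V)+\pos(R)$ with $V,R$ finite: then $\ell+t\,m\geq0$ holds on $K$ if and only if $(\ell+t\,m)(v)\geq0$ for all $v\in V$ and $\pi^\top r+t\,\rho^\top r\geq0$ for all $r\in R$. On the finitely many generators where $\ell$, resp.\ its linear part $\pi^\top(\cdot)$, is strictly positive, the corresponding inequality holds for all sufficiently small $t>0$; on the remaining generators $\ell$ vanishes, which forces $v\in F$, resp.\ $r\in\recc(F)$, and hence $m(v)\geq0$, resp.\ $\rho^\top r\geq0$, because $F\subseteq\{x:\rho^\top x\geq\beta\}$, so there the inequality holds for every $t\geq0$. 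Choosing $t$ below the finitely many positive thresholds yields the desired $t$, completing the induction. The main obstacle is exactly this last rotation step: proper separation by itself only produces a hyperplane that may still contain $C$, and peeling $C$ off of it while keeping all of $K$ on the correct side genuinely needs $K$ to be polyhedral---for a general closed convex $K$ the admissible $t$ could be forced down to $0$. Everything else is bookkeeping. (Alternatively, the statement is precisely \cite[Theorem~20.2]{Rockafellar:1970} and may simply be quoted.)
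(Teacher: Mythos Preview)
The paper does not prove this lemma at all; it merely states it with the citation \cite{Rockafellar:1970} and moves on. Your parenthetical remark at the end --- that the statement is precisely \cite[Theorem~20.2]{Rockafellar:1970} and may simply be quoted --- is therefore exactly the paper's ``proof''.

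That said, the argument you give is correct and is essentially the standard route to Rockafellar's Theorem~20.2: proper separation (Theorem~11.3) followed by a rotation within the separating hyperplane, where polyhedrality of $K$ is used via Minkowski--Weyl to guarantee that only finitely many thresholds constrain the rotation parameter $t$. The details check out: the reduction to relatively open $C$ is clean; the case split on whether $\min_{x\in K}\pi^\top x>\alpha$ correctly uses that a linear function bounded below on a polyhedron attains its infimum; the choice of $\rho\perp\pi$ is justified; and the verification that rays $r$ with $\pi^\top r=0$ lie in $\recc(F)$ and hence satisfy $\rho^\top r\geq 0$ is right. The base case of the induction is implicit but unproblematic (in dimension one, $C\subseteq H$ forces $C$ to be a single point, whence $F=K\cap H=\emptyset$ since $C\cap K=\emptyset$, and the shift applies). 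So you have supplied a complete proof where the paper is content to cite one.
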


Using this lemma, we obtain the following theorem (for the proof, we refer to \cite[Theorem 3.40]{Stursberg:2019}):

\begin{theorem}\label{thm:gen_pareto_if_facet}
	For a problem of the form \cref{eq:gen_problem}, let $(\pi, \pi_0) \in \reals^n \times \reals$ with $\pi_0 < 0$. The inequality $\pi^\top x +\pi_0 \eta \leq \alpha$ is Pareto-optimal if and only if $\halfspace{(\pi,\pi_0)}{\alpha}$ is a halfspace supporting $\epi(\subobj)$ in a point $(x^*,\eta^*) \in \epi(\subobj) \cap \relint(\conv(S)) \times \reals$.
\end{theorem}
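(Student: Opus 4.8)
The plan is to work with the inequality $\pi^\top x + \pi_0 \eta \le \alpha$ (recall $\pi_0 < 0$) through the affine lower bound $g(x) := (\pi^\top x - \alpha)/(-\pi_0)$ for $\subobj$: validity for $\epi(\subobj)$ then means $g \le \subobj$ on the domain of $\subobj$, and domination in the sense of \cref{def:pareto} means there is a valid affine lower bound $g'$ with $g' \ge g$ on $S$ and $g' \ne g$ at some point of $S$. As $g, g'$ are affine, such comparisons on $S$ may be read off on $\conv(S)$. I would restrict to valid inequalities throughout, which is the natural setting for Benders cuts; the forward direction then hinges on the separation \cref{thm:sep_not_contained}.

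For ``$\Leftarrow$'', assume $\halfspace{(\pi,\pi_0)}{\alpha}$ supports $\epi(\subobj)$ at $(x^*,\eta^*)$ with $x^* \in \relint(\conv(S))$. Tightness gives $\eta^* = g(x^*)$, and combining $(x^*,\eta^*) \in \epi(\subobj)$ with validity forces $g(x^*) = \subobj(x^*) = \eta^*$. If $g$ were dominated by a valid $g'$, then $g' \ge g$ on $\conv(S)$ while $g'(x^*) \le \subobj(x^*) = g(x^*)$, hence $g'(x^*) = g(x^*)$. Thus $g' - g$ is affine, nonnegative on the convex set $\conv(S)$, and zero at the relative interior point $x^*$; writing $x^*$ as a strict convex combination of an arbitrary $p \in \conv(S)$ and a point $q \in \conv(S)$ on the opposite side of $x^*$ along the line through $p$ forces $(g'-g)(p) = 0$. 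So $g' = g$ on all of $S$, contradicting strictness; hence $g$ is \pareto-optimal.

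For ``$\Rightarrow$'', let the valid inequality $\pi^\top x + \pi_0\eta \le \alpha$ be \pareto-optimal. I would first argue $\alpha = h_{\epi(\subobj)}(\pi,\pi_0)$: this support value is finite by validity and attained because $\epi(\subobj)$ is a nonempty polyhedron, and any strictly larger $\alpha$ makes $\pi^\top x + \pi_0\eta \le h_{\epi(\subobj)}(\pi,\pi_0)$ a valid inequality dominating the original uniformly over $S$. The halfspace therefore supports $\epi(\subobj)$ with a nonempty contact face $F := \epi(\subobj) \cap \hplane{(\pi,\pi_0)}{\alpha}$, and it remains to find $(x^*,\eta^*) \in F$ with $x^* \in \relint(\conv(S))$. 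Assume for contradiction $\proj_x(F) \cap \relint(\conv(S)) = \emptyset$; then \cref{thm:sep_not_contained}, applied with $C := \conv(S)$ and $K := \proj_x(F)$, yields $v \ne 0$ and $t \in \reals$ with $v^\top x \le t$ on $\conv(S)$, $v^\top x \ge t$ on $\proj_x(F)$, and $v^\top \bar x < t$ for some $\bar x \in S$. Tilting the cut to $(\pi - \lambda v)^\top x + \pi_0 \eta \le \alpha - \lambda t$ keeps the $\eta$-coefficient equal to $\pi_0 < 0$ and raises $g$ by $\lambda(t - v^\top x)/(-\pi_0) \ge 0$ on $\conv(S)$, strictly at $\bar x$, so the tilted cut dominates the original --- a contradiction, provided it stays valid for $\epi(\subobj)$ for all small $\lambda > 0$. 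This gives $\proj_x(F) \cap \relint(\conv(S)) \ne \emptyset$; any $x^*$ in this intersection, together with an $\eta^*$ with $(x^*,\eta^*) \in F$, proves the statement.

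The validity of the tilted cut is the step I expect to be the main obstacle. Writing $\epi(\subobj) = \conv(V) + \cone(R) + L$ with finitely many vertices $V$, extreme rays $R$, and lineality space $L$ (using that $\epi(\subobj)$ is polyhedral), validity amounts to $\pi^\top x + \pi_0 \eta - \alpha \le \lambda(v^\top x - t)$ on every generator. On $F$ the left-hand side is $0$ and $v^\top x - t \ge 0$, so this holds; at each of the finitely many vertices outside $F$ the left-hand side is strictly negative, which bounds $\lambda$ away from $0$; and a recession (resp.\ lineality) direction of $\epi(\subobj)$ along which $\pi^\top x + \pi_0\eta - \alpha$ stays $0$ is a recession (resp.\ lineality) direction of $F$, hence projects into $\{v^\top(\cdot) \ge 0\}$ (resp.\ $\{v^\top(\cdot) = 0\}$) and imposes no further restriction on $\lambda$. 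Choosing $\lambda$ below the resulting positive bound closes the argument; the degenerate cases $\epi(\subobj) = \emptyset$ or $S = \emptyset$ are trivial and handled separately.
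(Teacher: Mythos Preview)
The paper does not include its own proof here---it defers to \cite[Theorem~3.40]{Stursberg:2019}---but it singles out the separation result \cref{thm:sep_not_contained} as the key tool, and your argument is built around exactly that lemma, so your approach is the one the paper signals.

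Your proof is essentially correct. The only loose end is in the validity check for the tilted cut: your case split covers generators in $F$, points of $V$ outside $F$, and recession/lineality directions on which $(\pi,\pi_0)$ is tight, but it omits extreme rays $r\in R$ with $\pi^\top r_x+\pi_0 r_\eta<0$. These are handled exactly like the points of $V$ outside $F$---each contributes either no constraint on $\lambda$ or a strictly positive upper bound, and there are finitely many---so the fix is routine, but as written the case is missing. Your restriction to valid inequalities is the right call for this statement; without it the $\Rightarrow$ direction can fail, since an invalid affine minorant that is too large to be dominated by any valid one would be vacuously Pareto-optimal in the sense of \cref{def:pareto} yet could not support $\epi(\subobj)$.
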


For the case where $S$ is convex, the previous theorem immediately implies the following statement:
\begin{corollary}
  Let $S$ be convex. Then, $\pi^\top x +\pi_0 \eta \leq \alpha$ is Pareto-optimal if and only if $\halfspace{(\pi,\pi_0)}{\alpha}$ supports a face $F$ of $\epi_S(\subobj)$ such that $F \not\subset \relbd(S) \times \reals$.
\end{corollary}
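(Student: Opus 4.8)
The plan is to obtain this corollary as an immediate geometric reformulation of \cref{thm:gen_pareto_if_facet}. Convexity of $S$ enters only through $\conv(S) = S$, which gives $\relint(\conv(S)) = \relint(S)$ and $S \setminus \relint(S) = \relbd(S)$; so all that remains is to translate the phrase \enquote{$\halfspace{(\pi,\pi_0)}{\alpha}$ supports $\epi(\subobj)$ in a point $(x^*,\eta^*)$ with $x^* \in \relint(S)$} into a statement about the face of $\epi_S(\subobj) = \epi(\subobj) \cap (S\times\reals)$ exposed by $\halfspace{(\pi,\pi_0)}{\alpha}$. Throughout, as in \cref{def:pareto} and \cref{thm:gen_pareto_if_facet}, the inequality under consideration is a Benders cut, i.e.\ valid for $\epi(\subobj)$, and $\pi_0 < 0$.

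For the forward direction, assume $\pi^\top x + \pi_0\eta \le \alpha$ is Pareto-optimal. By \cref{thm:gen_pareto_if_facet} the halfspace $\halfspace{(\pi,\pi_0)}{\alpha}$ supports $\epi(\subobj)$ at some $(x^*,\eta^*) \in \epi(\subobj)$ with $x^* \in \relint(S)$. I would set $F := \hplane{(\pi,\pi_0)}{\alpha} \cap \epi_S(\subobj)$. Because $\epi_S(\subobj) \subseteq \epi(\subobj) \subseteq \halfspace{(\pi,\pi_0)}{\alpha}$, this $F$ is precisely the face of $\epi_S(\subobj)$ exposed by $\halfspace{(\pi,\pi_0)}{\alpha}$; it is nonempty since $x^* \in \relint(S) \subseteq S$ forces $(x^*,\eta^*) \in \epi_S(\subobj)$ while $(x^*,\eta^*) \in \hplane{(\pi,\pi_0)}{\alpha}$ by the choice of the supporting point; and $x^* \in \relint(S)$ gives $(x^*,\eta^*) \notin \relbd(S)\times\reals$, hence $F \not\subset \relbd(S)\times\reals$. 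Conversely, assume $\halfspace{(\pi,\pi_0)}{\alpha}$ supports a face $F$ of $\epi_S(\subobj)$ with $F \not\subset \relbd(S)\times\reals$. Convexity of $S$ yields a point $(x^*,\eta^*) \in F$ with $x^* \in S \setminus \relbd(S) = \relint(S)$, and then $(x^*,\eta^*) \in \epi_S(\subobj) \subseteq \epi(\subobj)$ lies on $\hplane{(\pi,\pi_0)}{\alpha}$; since the cut is valid for $\epi(\subobj)$, i.e.\ $\epi(\subobj) \subseteq \halfspace{(\pi,\pi_0)}{\alpha}$, the halfspace supports $\epi(\subobj)$ at $(x^*,\eta^*)$ with $x^* \in \relint(S) = \relint(\conv(S))$, and \cref{thm:gen_pareto_if_facet} gives Pareto-optimality.

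The step I expect to need the most attention is the use of $\epi(\subobj) \subseteq \halfspace{(\pi,\pi_0)}{\alpha}$ in the converse direction. A halfspace that \enquote{supports a face of $\epi_S(\subobj)$} is, on its own, only guaranteed to be valid for the possibly lower-dimensional set $\epi_S(\subobj)$, and when $S$ is not full-dimensional such a halfspace may tilt away from $\epi(\subobj)$ in directions orthogonal to $\aff(S)$ and thereby fail to be valid for $\epi(\subobj)$ altogether; so one has to keep the standing assumption that the inequality is a genuine Benders cut, valid for $\epi(\subobj)$ — exactly the regime in which \cref{def:pareto} and \cref{thm:gen_pareto_if_facet} operate. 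Once that is in place, the remaining content is the bookkeeping above; the only other point to check is that the sign condition $\pi_0 < 0$ from \cref{def:pareto} causes no loss here, which follows since upward unboundedness of the epigraph already forces $\pi_0 \le 0$ for any valid cut, and the vertical case $\pi_0 = 0$ is ruled out (or handled) by a short direct argument.
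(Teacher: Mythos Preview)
Your proposal is correct and follows exactly the route the paper intends: the corollary is stated as an immediate consequence of \cref{thm:gen_pareto_if_facet} without an explicit proof, and you have correctly spelled out the translation between \enquote{supports $\epi(\subobj)$ at a point with $x^* \in \relint(\conv(S)) = \relint(S)$} and \enquote{supports a face $F$ of $\epi_S(\subobj)$ with $F \not\subset \relbd(S)\times\reals$}. Your caveat that the inequality must be valid for $\epi(\subobj)$ (not merely for $\epi_S(\subobj)$) is well taken and matches the standing hypotheses under which \cref{def:pareto} and \cref{thm:gen_pareto_if_facet} operate.
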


\Textcite{Magnanti:1981} also provide an algorithm that computes a Pareto-optimal cut by solving the cut-generating problem twice. While their algorithm is defined for the original Benders optimality cuts, it can be adapted to work with other cut selection criteria, as well. \Textcite{Sherali:2013} present a method based on multiobjective optimization to obtain a cut that satisfies a weaker version of Pareto-optimality by solving only a single instance of the cut-generating LP.
\Textcite{Papadakos:2008} notes that, given a point in the relative interior of $\conv(S)$, a Pareto-optimal cut can be generated using a single run of the cut-generating problem. Also, under certain conditions on the problem, other points not in the relative interior allow this, as well. However, the approach suggested by the authors adds Pareto-optimal cuts independently from master- or subproblem solutions, together with subproblem-generated cuts, which are generally not Pareto-optimal. This means that the Pareto-optimal cuts which are added may not even cut off the current tentative solution. The upcoming \cref{thm:gen_pareto_obj} will lead to an approach that reconciles both objectives, generating cuts that are both Pareto-optimal and cut off the current tentative solution.

We use a result by \textcite{Cornuejols:2006} on the set of points exposed by a cut normal $(\pi,\pi_0)$ to derive a method that always obtains a Pareto-optimal cut. The following lemma has been slightly generalized and rewritten to match our setting and notation, but it follows the general idea of \Textcite[Theorem 3.4]{Cornuejols:2006}.

\begin{lemma}\label{lem:gen_exposed}
	Let $(x^*,\eta^*) \in \reals^n \times \reals \setminus \epi(\subobj)$ and $(\omega, \omega_0) \in \pos(\epi(\subobj)-(x^*,\eta^*))$ and let $(\pi,\pi_0)$ be optimal in $Q := (\epi(\subobj)-(x^*,\eta^*))^-$ with respect to the objective $(\omega, \omega_0)$. Then there exists $\alpha \in \reals$ such that $\halfspace{(\pi,\pi_0)}{\alpha}$ supports $\epi(\subobj)$ in
	\begin{equation}
		(\bar x, \bar\eta) := \frac{(\omega, \omega_0)}{-h_Q(\omega,\omega_0)} + (x^*,\eta^*).
	\end{equation}
\end{lemma}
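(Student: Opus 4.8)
The plan is to pass to the shifted set $C:=\epi(\subobj)-(x^*,\eta^*)$, so that $Q=C^-$ and $(\pi,\pi_0)$ is an optimal solution of $\max\Set*{\omega^\top\pi+\omega_0\pi_0\given(\pi,\pi_0)\in C^-}$ with optimal value $h_Q(\omega,\omega_0)$. I would then guess the right-hand side to be $\alpha:=\pi^\top x^*+\pi_0\eta^*-1$ and verify the two defining properties of a supporting halfspace separately. We may assume $(\omega,\omega_0)\neq 0$ (otherwise $(\bar x,\bar\eta)$ is undefined); since $(\omega,\omega_0)\in\pos(C)\setminus\Set{0}$, the last part of \cref{thm:cornuejols_reverse_gauge} yields $h_Q(\omega,\omega_0)<0$, so $(\bar x,\bar\eta)$ is well defined to begin with.

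Validity and contact are the easy half. From $(\pi,\pi_0)\in C^-$ we get $\langle(\pi,\pi_0),v\rangle\leq-1$ for all $v\in C$; translating back, $\pi^\top x+\pi_0\eta\leq\alpha$ for every $(x,\eta)\in\epi(\subobj)$, i.e.\ $\epi(\subobj)\subseteq\halfspace{(\pi,\pi_0)}{\alpha}$. For contact, optimality of $(\pi,\pi_0)$ gives $\omega^\top\pi+\omega_0\pi_0=h_Q(\omega,\omega_0)$, and a one-line homogeneity computation then shows $\pi^\top\bar x+\pi_0\bar\eta=\pi^\top x^*+\pi_0\eta^*+\frac{\omega^\top\pi+\omega_0\pi_0}{-h_Q(\omega,\omega_0)}=\alpha$, so $(\bar x,\bar\eta)\in\hplane{(\pi,\pi_0)}{\alpha}$.

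The real work — and the step I expect to be the main obstacle — is showing that the contact point actually lies in the set, i.e.\ $(\bar x,\bar\eta)\in\epi(\subobj)$. Here the plan is to invoke the cut-generating LP of \cref{rem:gen_alt_rep_rev_polar} with objective $(\tilde\omega,\tilde\omega_0):=(H\omega,-\omega_0)$, which after rewriting its constraints is exactly $\min\Set*{\lambda\given(x^*,\eta^*)+\lambda(\omega,\omega_0)\in\epi(\subobj)}$. This LP is feasible precisely because $(\omega,\omega_0)\in\pos(C)$ (writing $(\omega,\omega_0)=\mu_0 v$ with $v\in C$, $\mu_0>0$, the value $\lambda=1/\mu_0$ is feasible), and its set of feasible $\lambda$ is a closed interval avoiding $0$ (as $(x^*,\eta^*)\notin\epi(\subobj)$), hence consists of strictly positive numbers; so the LP attains a finite optimum $\lambda^*>0$. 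By \cref{thm:gen_alt_rep_alt_poly}, the scaled dual solution $\frac{1}{\lambda^*}(\gamma,\gamma_0)$ is then optimal for $\max\Set*{\tilde\omega^\top\gamma+\tilde\omega_0\gamma_0\given(\gamma,\gamma_0)\in P^\leq(x^*,\eta^*)}$ with objective value $-\frac{1}{\lambda^*}$, and by \cref{thm:dual_problem_equiv} this value equals $h_Q(\omega,\omega_0)$. Hence $\lambda^*=\frac{1}{-h_Q(\omega,\omega_0)}$, and since the optimum is attained, $(x^*,\eta^*)+\lambda^*(\omega,\omega_0)=(\bar x,\bar\eta)\in\epi(\subobj)$.

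Combining the pieces gives $\halfspace{(\pi,\pi_0)}{\alpha}\supseteq\epi(\subobj)$, $(\bar x,\bar\eta)\in\epi(\subobj)$, and $(\bar x,\bar\eta)\in\hplane{(\pi,\pi_0)}{\alpha}$, which is exactly the assertion that $\halfspace{(\pi,\pi_0)}{\alpha}$ supports $\epi(\subobj)$ in $(\bar x,\bar\eta)$. The delicate point throughout is the identity $\lambda^*=1/(-h_Q(\omega,\omega_0))$: it relies on chaining the three reformulations (the $\min\lambda$ LP, the optimization over the relaxed alternative polyhedron, and the optimization over the reverse polar set) and on checking that the finiteness and attainment hypotheses of \cref{thm:gen_alt_rep_alt_poly,thm:dual_problem_equiv} hold — which is exactly where the hypothesis $(\omega,\omega_0)\in\pos(\epi(\subobj)-(x^*,\eta^*))$ is essential, simultaneously guaranteeing $h_Q(\omega,\omega_0)<0$ and feasibility of the LP. A more self-contained alternative would prove $-h_Q(\omega,\omega_0)=\sup\Set*{\mu>0\given(\omega,\omega_0)/\mu\in C}$ directly via a reverse-gauge duality and then exhibit a supporting functional of $C$ at that scaled point, trading the LP bookkeeping for some case analysis when $\epi(\subobj)$ is not full-dimensional.
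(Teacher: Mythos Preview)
Your argument is correct, but it takes a genuinely different route from the paper's proof. The paper does not argue directly at all: it cites \cite[Theorem~3.4]{Cornuejols:2006} for the case $(\omega,\omega_0)\in\epi(\subobj)-(x^*,\eta^*)$ and then reduces the general case $(\omega,\omega_0)\in\pos(\epi(\subobj)-(x^*,\eta^*))$ to that one by a one-line homogeneity argument (scale $(\omega,\omega_0)$ by some $\mu>0$ so that it lands in $C$, note that optimizers for $(\omega,\omega_0)$ and $\mu(\omega,\omega_0)$ coincide, and observe that the formula for $(\bar x,\bar\eta)$ is scale-invariant).

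Your proof, by contrast, is essentially self-contained within the paper's own machinery. You fix the explicit right-hand side $\alpha=\pi^\top x^*+\pi_0\eta^*-1$, verify validity and contact directly from the definition of $C^-$, and---this is the substantive difference---establish $(\bar x,\bar\eta)\in\epi(\subobj)$ by recognizing the primal cut-generating LP of \cref{rem:gen_alt_rep_rev_polar} as $\min\Set{\lambda\given(x^*,\eta^*)+\lambda(\omega,\omega_0)\in\epi(\subobj)}$, and then chaining \cref{thm:gen_alt_rep_alt_poly} and \cref{thm:dual_problem_equiv} to identify its optimum $\lambda^*$ with $1/(-h_Q(\omega,\omega_0))$. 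This buys you an explicit $\alpha$ and a nice geometric interpretation of $(\bar x,\bar\eta)$ as the primal optimum of that LP, and it avoids the external black box; the price is the LP bookkeeping you flag at the end. Both approaches implicitly exclude $(\omega,\omega_0)=0$, as you note.
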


\begin{proof}
  The case of $(\omega, \omega_0) \in (\epi(\subobj)-(x^*,\eta^*))$ was proven by \Textcite[Theorem 3.4]{Cornuejols:2006}. If $(\omega, \omega_0) \in \pos(\epi(\subobj)-(x^*,\eta^*))$, then there is $\mu > 0$ such that $\mu \cdot (\omega,\omega_0) \in (\epi(\subobj)-(x^*,\eta^*))$. Note that if $(\pi,\pi_0)$ is optimal with respect to $(\omega, \omega_0)$, then also with respect to $\mu \cdot (\omega,\omega_0)$. Thus it follows from \Textcite[Theorem 3.4]{Cornuejols:2006} that there exists $\alpha \in \reals$ such that $H^\leq_{(\pi, \pi_0),\alpha}$ supports $\epi_S(\subobj)$ in
	\begin{equation}
		(\bar x, \bar\eta) := \frac{\mu \cdot (\omega, \omega_0)}{-h_Q(\mu\omega,\mu\omega_0)} + (x^*,\eta^*)
		= \frac{(\omega, \omega_0)}{-h_Q(\omega,\omega_0)} + (x^*,\eta^*).
	\end{equation}
\end{proof}

We can now prove the theorem already mentioned above.

\begin{theorem}\label{thm:gen_pareto_obj}
	Let $(x^*,\eta^*) \in S \times \reals$, $(\omega, \omega_0) \in \relint(\conv(\epi_S(\subobj)-(x^*,\eta^*)))$, $(\pi,\pi_0)$ be optimal in $(\epi(\subobj)-(x^*,\eta^*))^-$ with respect to the objective $(\omega, \omega_0)$, and $\pi_0 < 0$. Then $(\pi,\pi_0)$ satisfies the \pareto criterion.
\end{theorem}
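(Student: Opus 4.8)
The plan is to combine \cref{lem:gen_exposed}, which pins down exactly which point of $\epi(\subobj)$ the selected cut exposes, with \cref{thm:gen_pareto_if_facet}, which characterises Pareto-optimality through the position of such an exposed point relative to $\conv(S)$. First I would dispose of the degenerate case: if $(x^*,\eta^*)\in\epi(\subobj)$ then, since $x^*\in S$, the origin lies in $\epi(\subobj)-(x^*,\eta^*)$, so the reverse polar set is empty, there is no optimal $(\pi,\pi_0)$, and the statement is vacuous; hence assume $(x^*,\eta^*)\notin\epi(\subobj)$. Next I would check the hypotheses of \cref{lem:gen_exposed}. Since $\epi_S(\subobj)\subseteq\epi(\subobj)$ and $\epi(\subobj)$ is convex, $\conv(\epi_S(\subobj))\subseteq\epi(\subobj)$, so
\[(\omega,\omega_0)\in\relint\!\big(\conv(\epi_S(\subobj)-(x^*,\eta^*))\big)\subseteq\epi(\subobj)-(x^*,\eta^*)\subseteq\pos\!\big(\epi(\subobj)-(x^*,\eta^*)\big).\]
Applying \cref{lem:gen_exposed} then yields $\alpha\in\reals$ such that $\halfspace{(\pi,\pi_0)}{\alpha}$ supports $\epi(\subobj)$ in
\[(\bar x,\bar\eta):=\frac{(\omega,\omega_0)}{-h_Q(\omega,\omega_0)}+(x^*,\eta^*),\qquad Q:=\big(\epi(\subobj)-(x^*,\eta^*)\big)^-.\]

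Second, I would locate $(\bar x,\bar\eta)$ precisely. Because $(\omega,\omega_0)\in\epi(\subobj)-(x^*,\eta^*)$, the last part of \cref{thm:cornuejols_reverse_gauge} gives $h_Q(\omega,\omega_0)\le-1$, hence $\lambda:=\frac{1}{-h_Q(\omega,\omega_0)}\in(0,1]$; writing $(\hat x,\hat\eta):=(\omega,\omega_0)+(x^*,\eta^*)\in\relint(\conv(\epi_S(\subobj)))$ we obtain
\[(\bar x,\bar\eta)=\lambda(\hat x,\hat\eta)+(1-\lambda)(x^*,\eta^*).\]
The crucial claim is that $\bar x\in\relint(\conv(S))$. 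Projecting onto the $x$-coordinates — a linear map commutes with $\conv$ and carries relative interiors onto relative interiors of images — and using $\proj_x(\epi_S(\subobj))=\{x\in S:\subobj(x)<\infty\}$, we get $\hat x\in\relint(\conv(\{x\in S:\subobj(x)<\infty\}))$. In the setting considered here this set has the same affine hull as $\conv(S)$ (in particular whenever the subproblem is feasible throughout $\conv(S)$, as in the Magnanti--Wong framework), so $\hat x\in\relint(\conv(S))$; since $x^*\in S\subseteq\conv(S)$ and $\lambda\in(0,1]$, the line-segment principle then yields $\bar x=\lambda\hat x+(1-\lambda)x^*\in\relint(\conv(S))$.

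Finally, we have $(\bar x,\bar\eta)\in\epi(\subobj)\cap\big(\relint(\conv(S))\times\reals\big)$, and $\pi_0<0$ by hypothesis, so \cref{thm:gen_pareto_if_facet} applies and shows that $\pi^\top x+\pi_0\eta\le\alpha$ is Pareto-optimal; thus $(\pi,\pi_0)$ satisfies the \pareto criterion. I expect the genuinely delicate point to be the middle step, namely verifying that the exposed point's $x$-component lands in $\relint(\conv(S))$: this requires controlling how the feasibility domain of the subproblem meets $S$ and ruling out that $\epi_S(\subobj)$ is confined to a slice of strictly lower dimension than $\conv(S)\times\reals$. The convex-combination identity $(\bar x,\bar\eta)=\lambda(\hat x,\hat\eta)+(1-\lambda)(x^*,\eta^*)$ with $(\hat x,\hat\eta)\in\relint(\conv(\epi_S(\subobj)))$ and $(\bar x,\bar\eta)\in\epi(\subobj)$ is what one leans on there.
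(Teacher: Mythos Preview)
Your argument is exactly the paper's: apply \cref{lem:gen_exposed}, use $(\omega,\omega_0)\in\epi(\subobj)-(x^*,\eta^*)$ to get $h_Q(\omega,\omega_0)\le-1$ and hence $\lambda\in(0,1]$, write $(\bar x,\bar\eta)=\lambda(\hat x,\hat\eta)+(1-\lambda)(x^*,\eta^*)$ with $(\hat x,\hat\eta)=(\omega,\omega_0)+(x^*,\eta^*)$, deduce $\bar x\in\relint(\conv(S))$, and finish with \cref{thm:gen_pareto_if_facet}. The step you single out as delicate is precisely the one the paper handles by the bare assertion $\relint(\conv(\epi_S(\subobj)))\subseteq\relint(\conv(S))\times\reals$, so your extra care about affine hulls goes beyond what the paper itself supplies.
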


\begin{proof}
	Let $Q := (\epi(\subobj)-(x^*,\eta^*))^-$ again and $\lambda := -(h_Q(\omega,\omega_0))^{-1}$. Since, in particular, $(\omega, \omega_0) \in \epi_S(\subobj)-(x^*,\eta^*)$ it follows from the definition of the reverse polar set that $h_Q(\omega,\omega_0) \leq -1$ and thus $\lambda \in [0,1]$.

  For $(\bar x, \bar\eta)$ from \cref{lem:gen_exposed}, we thus obtain that $(\bar x, \bar\eta) = \lambda \left((\omega,\omega_0) + (x^*,\eta^*)\right) + (1-\lambda) (x^*,\eta^*)$ is a convex combination of a point $(\omega, \omega_0) + (x^*,\eta^*) \in \relint(\conv(\epi_S(\subobj))) \subseteq \relint(\conv(S)) \times \reals$ and $(x^*,\eta^*) \in S \times \reals$. Therefore, $\bar x \in \relint(\conv(S))$ and thus by \cref{thm:gen_pareto_if_facet} the cut defined by $(\pi,\pi_0)$ is Pareto-optimal.
\end{proof}

\begin{table*}[t]

\centering
\begin{tabular}{l|ccc}
 $(\tilde\omega,\tilde\omega_0)$	&\MIS & \facet& \pareto\\ \hline
$\in \reals^m\times\reals$ & \cmark	& \xmark	& \xmark\\
$\in (H,-1)\cdot\reals^n\times\reals$ & \cmark	& (\cmark)	& \xmark\\
$\in (H,-1)\cdot\relint(\conv(\epi_S(\subobj)-(x^*,\eta^*)))$ & \cmark	& (\cmark)	& \cmark
\end{tabular}
\caption{Properties of a cut resulting from an extremal point in the alternative polyhedron which maximizes $(\tilde\omega,\tilde\omega_0)$ (under the assumption that a finite optimum exists). The checkmark in parentheses (\cmark) indicates that the property is satisfied for all $(\tilde\omega,\tilde\omega_0)$ in the specified set except those from a specific sub-dimensional subset.}\label{tab:cut_selection_properties}
\end{table*}

The results from this section are summarized in \cref{tab:cut_selection_properties}.

\section{Outlook}

We conclude this paper by an outlook on interesting research questions in the context of cut selection for Benders decomposition.

In a generic implementation of Benders decomposition, feasible solutions are used primarily to decide when the algorithm has converged. By \cref{thm:cornuejols_reverse_gauge}, any such solution can be used to derive a subproblem objective which satisfies the prerequisites of both \cref{thm:gen_opt_gamma} and \cref{thm:gen_vertex_equivalence}. They thus result in the generation of cuts which are always supporting and often even support a facet. In our preliminary experiments, these cuts proved to be very useful in improving the performance of a Benders decomposition algorithm. Since information from a feasible solution can thus be used within the cut generation, it makes sense to investigate more closely the possibilities how such a solution can be obtained during the algorithm. This is likely to be very problem-specific, but some general ideas could be:
	\begin{itemize}
		\item How is the information from feasible solutions computed in different iterations best aggregated? Does it make sense to use \eg~a stabilization approach or a convex combination with some other choices for $(\omega,\omega_0)$, \eg~from previous iterations? This corresponds to the method used by \textcite{Papadakos:2008} in their empirical study.
		\item More broadly, what different methods can be used to generate feasible solutions and what effect do different feasible solutions have on cut generation and the computational performance of the algorithm?
	\end{itemize}

Furthermore, if a feasible solution is not available as the basis for a subproblem objective, the cut-generating problem might be unbounded/infeasible. On the other hand, the approach from \cite{Fischetti:2010} with $\tilde \omega = \mathbbm{1}$ yields a cut-generating LP that is always feasible, but the resulting cut might be weaker. How can both approaches be combined in a best-possible way? For instance, is choosing $\tilde\omega = H\omega + \epsilon \cdot \mathbbm{1}$ as the relaxation term and letting $\epsilon$ go to zero a good choice?

Finally, our approach provides a clear geometric interpretation of the interaction between parametrization of the cut-generating LP and the resulting cut normals. How can this be used to leverage a-priori knowledge about the problem (or information obtained through a fast preprocessing algorithm) to improve the selection of a subproblem objective $(\omega,\omega_0)$ from a set of cuts satisfying the same quality criteria (e.g. that are all facet-defining)?

These questions would be worth addressing in the context of further theoretical research, but also more extensive empirical studies of different parametrization strategies.

\bigskip

\textit{Acknowledgements:} The authors thank their colleagues Magdalena Stüber and Michael Ritter for their invaluable support.

\printbibliography[heading=bibintoc]

\end{document}